\def\eqref#1{equation~\ref{#1}}
\def\1{\bm{1}}
\DeclareMathAlphabet{\mathsfit}{\encodingdefault}{\sfdefault}{m}{sl}
\SetMathAlphabet{\mathsfit}{bold}{\encodingdefault}{\sfdefault}{bx}{n}
\DeclareMathOperator*{\argmax}{arg\,max}
\pgfplotsset{width=7.5cm,compat=1.18}
\theoremstyle{plain}
\newtheorem{theorem}{Theorem}[section]
\newtheorem{lemma}[theorem]{Lemma}
\newtheorem{proposition}[theorem]{Proposition}
\newtheorem{property}[theorem]{Property}
\theoremstyle{definition}
\newtheorem{definition}{Definition}[section]
\newtheorem{example}[definition]{Example}
\newtheorem{remark}[definition]{Remark}
\newmdtheoremenv[backgroundcolor=yellow!35, hidealllines=true]{shadedlemma}{Lemma}
\newmdtheoremenv[backgroundcolor=green!20, hidealllines=true]{shadeddefinition}{Definition}
\newmdtheoremenv[backgroundcolor=green!20, hidealllines=true]{shadedassumption}{Assumption}
\title{Beyond Log-Concavity: Theory and Algorithm for\\ Sum-Log-Concave Optimization}
\author{\name Mastane Achab \email mastane.achab@tii.ae \\
      \addr Technology Innovation Institute, 9639 Masdar City, Abu Dhabi, United Arab Emirates
      }
\begin{document}

\maketitle

\begin{abstract}

This paper extends the classic theory of convex optimization to the minimization of functions that are equal to the negated logarithm of what we term as a ``sum-log-concave'' function, i.e., a sum of log-concave functions.
In particular, we show that such functions are in general not convex but still satisfy generalized convexity inequalities. 
These inequalities unveil the key importance of a certain vector that we call the ``cross-gradient'' and that is, in general, distinct from the usual gradient.
Thus, we propose the Cross Gradient Descent (XGD) algorithm moving in the opposite direction of the cross-gradient and derive a convergence analysis.
As an application of our sum-log-concave framework, we introduce the so-called ``checkered regression'' method relying on a sum-log-concave function. 
This classifier extends (multiclass) logistic regression to non-linearly separable problems since it is capable of tessellating the feature space by using any given number of hyperplanes, creating a checkerboard-like pattern of decision regions.

\end{abstract}

\section{Introduction}\label{sec:intro}

In the landscape of machine learning and optimization, the quest to understand the mechanics of gradient descent algorithms, especially in the non-convex scenario, has received much attention. Traditional global convergence analysis of these algorithms has been confined mostly to convex settings (\cite{boyd2004convex},\cite{bach2021learning}). Despite recent advances, our understanding of the performance of these algorithms beyond the convex setting is rather limited, with most results being confined to local convergence analyses \citep{danilova2022recent}.

Deep learning methods, such as deep neural networks, have demonstrated remarkable empirical successes in tackling non-linear and high-dimensional problems, ranging from image or speech recognition to natural language processing (\cite{lecun2015deep}, \cite{goodfellow2016deep}). These techniques excel in creating complex decision boundaries and robust feature representations, efficiently capturing intricate patterns in the data. However, while they are empirically proficient, the theoretical underpinnings, particularly a rigorous global convergence analysis of the associated optimization methods, remain an active area of investigation. Most existing theories focus on specific architectures or rely on idealized assumptions that might not hold in practice (\cite{jacot2018neural},\cite{chizat2018global}). Therefore, while deep learning methods continue to dominate the field due to their powerful performance, the lack of comprehensive theoretical guarantees for their convergence behaviors warrants further research.

In this paper, we extend the conventional global convergence analysis of gradient descent, taking it way beyond the classical convex case. We examine functions represented as the negated logarithm of what we call a \emph{sum-log-concave} function, that is a sum of log-concave functions. Our analysis reverts to the traditional convex case when applied to a single log-concave function. In the more general scenario, we are still able to provide an upper bound for the excess risk of order $\tilde{\mathcal{O}}(1/\sqrt{T})$, where $T$ represents the number of steps of our proposed algorithm.

Furthermore, we introduce a new sum-log-concave model that generalizes (multiclass) logistic regression, suitable for non-linearly separable problems. For binary classification, our model is capable of tessellating the feature space with any given number of hyperplanes. The decision regions, outputting label 0 or 1, follow a checkerboard-like pattern, generalizing the half-spaces and the linear prediction rule learned by logistic regression (see e.g. \cite{bishop2006pattern}, \cite{hastie2009elements}).
For that reason, we call this new method the \emph{checkered regression model} and we define it in a computationally efficient
way through the circular convolution of several SoftArgMax vectors.

In the binary classification case, our checkered regression model can be seen as a single hidden-layer neural network with $\tanh$ activations that are multiplied together, instead of being additively combined as is customary in classic deep learning architectures.
In particular, when multiplying only two hyperbolic tangents, our model can be interpreted as a smooth XOR with continuous sigmoids in place of Boolean variables.


\paragraph{Key contributions} Our main contributions can be summarized as follows:
\begin{enumerate}
    \item We prove a convergence guarantee of our proposed ``cross gradient descent'' algorithm holding for any function equal to the negated logarithm of a sum-log-concave function.
    \item We introduce a new sum-log-concave model (namely ``the checkered regression'') that naturally extends multiclass logistic regression beyond
    linearly separable problems.
\end{enumerate}

\paragraph{Notations}
The Euclidean norm of any vector $v \in \mathbb{R}^p$ ($p\ge 1$) is denoted $\| v \|$
and, for any $r>0$, let $\mathcal{B}(v, r) = \{ v' \in \mathbb{R}^p : \| v'-v \|\le r \} $ denote the Euclidean ball centered at $v$ with radius $r$.
Given two vectors $u= (u_0,\dots, u_{p-1}) \in \mathbb{R}^p$ and $v= (v_0,\dots, v_{p-1}) \in \mathbb{R}^p$, their circular convolution $ u \circledast v$ is another $p$-dimensional vector with $k$-th entry equal to $\sum_{ 0\le i,j\le p-1: i+j \equiv k [p] } u_i v_j$,
for any $k\in \{0,\dots, p-1\}$.
For any integer $S\ge 1$, we denote by
$\mathfrak{S}_S$ the set of permutations of $\{1,\dots,S\}$ and by $\Delta_S$ the probabilty simplex:
\begin{equation*}
    \Delta_S = \left\{ \mu=(\mu_1,\dots,\mu_S) \in [0,1]^S : \mu_1 +\dots +\mu_S = 1 \right\} \ .
\end{equation*}
The Kullback-Leibler divergence \cite{kullback1951information} will be denoted by ``$D_{\text{KL}}$'' throughout the paper: for any $\mu, \nu \in \Delta_S$,
$ D_{\text{KL}}( \mu \| \nu )
= \sum_{s=1}^S \mu_s \log\left( \frac{\mu_s}{\nu_s} \right) $.
Given $K\ge 2$ vectors $v^{(1)}=(v^{(1)}_1, \dots, v^{(1)}_{d_1}), \dots, v^{(K)}=(v^{(K)}_1, \dots, v^{(K)}_{d_K})$, their outer product, denoted $v^{(1)}\otimes \dots \otimes v^{(K)}$, is the $K$-way tensor defined such that $[v^{(1)}\otimes \dots \otimes v^{(K)}]_{j_1,\dots,j_K} = v^{(1)}_{j_1} \times \dots \times v^{(K)}_{j_K}$, for all $(j_1,\dots,j_K) \in \{1,\dots,d_1\}\times \dots \times \{1,\dots,d_K\}$. 
For $\upsilon=(\upsilon_1,\dots,\upsilon_m) \in \{0,\dots,c-1\}^m$ (with $c\ge 2$), we denote $|\upsilon|=\sum_{k=1}^m \upsilon_k$.

\section{Motivation}
\label{sec:motiv}

\subsection{Ubiquitous log-concave functions in machine learning}

In preamble, let us recall that a function $p : \mathbb{R}^m \rightarrow (0, \infty)$ is \emph{log-concave} if $\log(p)$ is concave (or equivalently, if $-\log(p)$ is convex), i.e. if for all $x,y \in \mathbb{R}^m$, for all $\lambda \in [0, 1]$,
\begin{equation}
    \label{eq:logconcave}
    p( \lambda x + (1-\lambda)y )
    \ge p(x)^\lambda \cdot p(y)^{1-\lambda} \ .
\end{equation}
Typical examples of log-concave functions that are used in machine learning include:
\begin{itemize}
    \item The sigmoid function defined for all $x\in \mathbb{R}$ as $\sigma(x)=(1+e^{-x})^{-1} \in (0,1)$, that is used in logistic regression.
    \item Given $c\ge 2$, each component $\sigma_j(z)$ of the vector-valued ``SoftArgMax'' function 
    \begin{equation}
    \label{eq:softargmax}
    \vec \sigma : z=(z_0,\dots,z_{c-1}) \in \mathbb{R}^c \mapsto \begin{pmatrix}
        \sigma_0(z) \\
        \vdots \\
        \sigma_{c-1}(z)
    \end{pmatrix} 
    =
    \begin{pmatrix}
        \frac{e^{z_0}}{ \sum_{k=0}^{c-1} e^{z_k} } \\
        \vdots \\
        \frac{e^{z_{c-1}}}{ \sum_{k=0}^{c-1} e^{z_k} }
    \end{pmatrix} \in \Delta_c \ ,
    \end{equation}
    that is used to produce a categorical probability distribution in multiclass logistic regression.

    \item The Gaussian bell curve function
    $f(x) = e^{-x^2/2}$ ($\forall x\in \mathbb{R}$)
    is log-concave since the quadratic loss function $-\log(f(x)) = x^2/2$ is convex.
\end{itemize}

For all three types of log-concave functions described above, taking the negative logarithm (a.k.a. ``log-loss'') yields the loss function of a popular machine learning method, respectively: logistic regression, multi-class logistic regression, and least squares linear regression.

Next, we provide motivating examples for the sum-log-concave assumption studied in this paper.
One shall notice that this assumption is frequently encountered in the machine learning literature through the Gaussian mixture model (GMM).
Indeed, the probability density of a GMM is a sum of log-concave Bell curve functions.

\subsection{Motivating examples of sum-log-concave methods}

\begin{example}[SoftMin Regression]
\label{ex:softmin}
Given an input vector $x\in \mathbb{R}^d$, and for each $1\le s \le S$, $K_s\ge 1$ targets $y_{s,1},\dots,y_{s,K_s} \in \mathbb{R}$, 
and parameters $\omega_{s,1},\dots, \omega_{s,K_s} \in \mathbb{R}^d$ (all collected in ``$\omega$''), 
we define the ``SoftMin regression'' loss function as follows:
    \begin{equation*}
        F(\omega) = -\log\left( \sum_{s=1}^S e^{- \frac12 \sum_{k=1}^{K_s} ( y_{s,k} - x^\top w_{s,k} )^2} \right) \ ,
    \end{equation*}
    which is equal to the log-loss of a sum of log-concave functions.
If $S=1$ and $K_1=1$, this corresponds to a least squares linear regression.
In the general case, this objective smoothly mimicks the minimum of the square losses:
\begin{equation*}
    F(\omega)
    \lessapprox \min_{1\le s\le S} \frac12 \sum_{k=1}^{K_s} ( y_{s,k} - x^\top w_{s,k} )^2  \ .
\end{equation*}

\end{example}

\begin{example}[Smooth XOR]
\label{ex:smoothXOR}
Given two Boolean variables $A,B \in \{0,1\}$, we recall that the XOR logical gate is given by: $\text{XOR}(A,B)=A(1-B) + (1-A)B$.
By replacing these Boolean variables by continuous sigmoids, namely $A=\sigma(a), B=\sigma(b)$ with $a,b\in \mathbb{R}$ and $\sigma(x)=(1+e^{-x})^{-1}=1-\sigma(-x)$, we obtain a sum-log-concave smooth XOR function:
\begin{equation*}
    \chi(a,b) = \sigma(a)\sigma(-b) + \sigma(-a)\sigma(b)
    = \frac{e^{-a} + e^{-b}}{1+e^{-a}+e^{-b}+e^{-a-b}} \ .
\end{equation*}
In fact, this function naturally arises as the posterior probability of binary labelled data sampled from mixtures of distributions belonging to some exponential family. For instance in the Gaussian case, consider a random pair $(X,Y)$ valued in $\mathbb{R}^d \times \{-1, +1\}$ such that $\mathbb{P}(Y=1)=\frac12$ and
\begin{equation*}
    \begin{cases}
        [ X | Y=+1 ] \sim \frac12 \mathcal{N}(\mu_0, I_d) + \frac12 \mathcal{N}(\mu_0+\mu_1+\mu_2, I_d) \\
        [ X | Y=-1 ] \sim \frac12 \mathcal{N}(\mu_0+\mu_1, I_d) + \frac12 \mathcal{N}(\mu_0+\mu_2, I_d) \ ,
    \end{cases}
\end{equation*}
such that $\langle \mu_1, \mu_2 \rangle = 0$ (which is known as the ``XOR Gaussian mixture model'', see e.g. \cite{ben2022high}).
Then, a direct application of Bayes' rule shows that the posterior class probability writes as a smooth XOR with affine arguments with respect to any data point $x$:
\begin{equation*}
    \mathbb{P}(Y=1 | X=x) = \chi( w_1^\top x + b_1 , w_2^\top x + b_2) \ ,
\end{equation*}
where $w_1,w_2 \in \mathbb{R}^d$ and $b_1,b_2 \in \mathbb{R}$ are constants depending on the Gaussian mean parameters $\mu_0, \mu_1,\mu_2$.
This smooth XOR model will be further generalized in section \ref{sec:CR}.
\end{example}

As seen in Example \ref{ex:smoothXOR}, the smooth XOR function $\chi$ is a sum of two log-concave functions, which makes it fundamentally distinct from the well-studied class of log-concave functions. For instance, any log-concave function $p$ (see Eq. \ref{eq:logconcave}) is also quasi-concave (i.e. for all $x,y$, for all $\lambda \in [0,1]$, $p(\lambda x + (1-\lambda y)) \ge \min( p(x), p(y))$).
Nevertheless, the smooth XOR function is not quasi-concave: indeed, 
\begin{equation*}
    \chi\left( \frac{1 + 0}{2}, \frac{0+1}{2} \right)
    \approx 0.47 < \chi(1,0)=\chi(0,1) = 0.5 \ .
\end{equation*}

\subsection{Learning rock-paper-scissors}


We now illustrate on a simple example that sum-log-concave models allows to learn richer patterns than standard log-concave methods.
More specifically, we consider the smooth XOR classifier that we define through the smooth XOR function $\chi$ introduced in Example \ref{ex:smoothXOR} together with parameters vectors $\omega_1,\omega_2 \in \mathbb{R}^d$ (with no bias parameters). Given an input vector $x \in \mathbb{R}^d$, our proposed classifier outputs the probability: $\chi(\omega_1^\top x, \omega_2^\top x) \in (0,1)$.
We seek to minimize (with respect to $\omega_1,\omega_2$) the corresponding log-loss over the dataset consisting of the three possible duels in the rock-paper-scissors game.
The rules of this game form a cycle as illustrated in Figure \ref{fig:rock-paper-scissors}.
Formally, rock-paper-scissors can be represented as an intransitive distribution $P$ over $\mathfrak{S}_3$ with pairwise marginals $p_{ab} = \mathbb{P}_{\Sigma \sim P}( \Sigma(a) < \Sigma(b) )$ (for $a \neq b \in \{ \text{rock},\text{paper},\text{scissors} \}$) that violate the standard stochastic transitivity assumption:
$ p_{ab} \ge \frac{1}{2} \  \text{and} \ p_{bc} \ge \frac{1}{2} \  \xcancel{\Longrightarrow} \  p_{ac} \ge \frac{1}{2} $, as shown in Figure \ref{tab:rps_probabilities}.
Our smooth XOR method is compared against the classic Bradley-Terry model (\cite{bradley1952rank}) ; both methods are learned via gradient descent\footnote{The code can be found here: https://gist.github.com/mastane/rock-paper-scissors.ipynb}. 
Figure \ref{fig:rock-paper-scissors_learning} shows that the smooth XOR approach successfully learns the cyclic pairwise probabilities with a log-loss tending towards zero, while the Bradley-Terry method cannot do better than predicting equal probability $\frac12$ for each duel and has a log-loss that converges to $\log(2)$.

\begin{figure}[h]
\centering
\begin{minipage}[t]{0.45\textwidth}
\begin{tikzpicture}[->,>=to,shorten >=1pt,auto,node distance=2cm,
                    thick,main node/.style={circle,fill=blue!20,draw,font=\sffamily\Large\bfseries}, scale=0.8, every node/.style={transform shape}]

  \node[main node] (1) {Rock};
  \node[main node] (2) [below left=of 1] {Scissors};
  \node[main node] (3) [below right=of 1] {Paper};

  \path[every node/.style={font=\sffamily\small}, line width=.9mm]
    (1) edge [bend right] node[left] {} (2)
    (2) edge [bend right] node[below] {} (3)
    (3) edge [bend right] node[right] {} (1);

\end{tikzpicture}

\caption{Cyclic representation of the rules of rock-paper-scissors. In every possible duel, the arrow starts from the winner and points towards the loser.}
\label{fig:rock-paper-scissors}
\end{minipage}%
\hfill
\begin{minipage}[t]{0.45\textwidth}
\centering
\begin{tabular}{|c|c|c|c|}
\hline
 & Rock & Paper & Scissors \\ \hline
Rock & \diagbox{}{} & $p_{rp} = 0$ & $p_{rs} = 1$ \\ \hline
Paper & $p_{pr} = 1$ & \diagbox{}{} & $p_{ps} = 0$ \\ \hline
Scissors & $p_{sr} = 0$ & $p_{sp} = 1$ & \diagbox{}{} \\ \hline
\end{tabular}
\caption{Pairwise marginal probabilities for rock-paper-scissors game.}
\label{tab:rps_probabilities}
\end{minipage}

\end{figure}

\begin{figure}[t]
  \centering
  \includegraphics[width=0.7\textwidth]{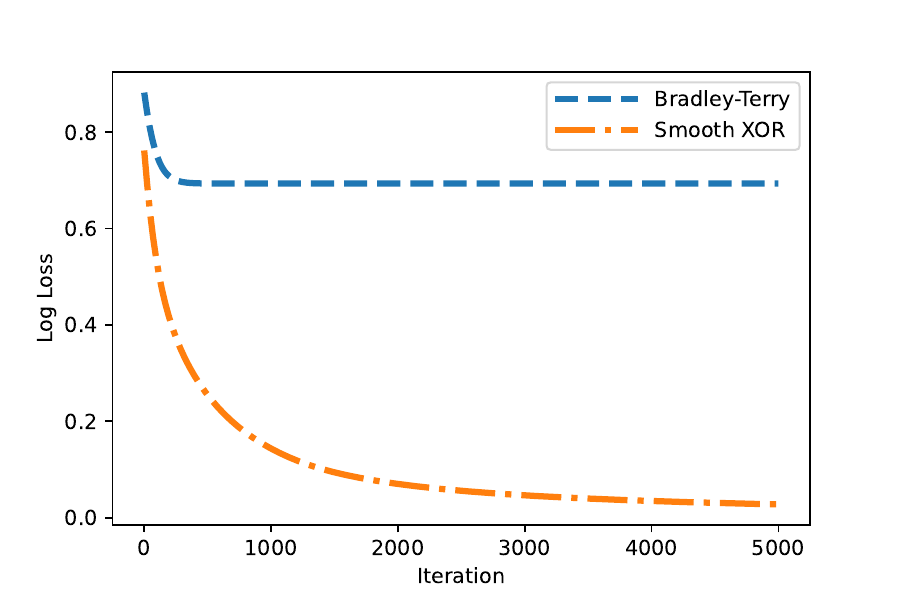}
  \caption{Learning dynamics of the Bradley-Terry model (only allowing stochastically transitive ranking distributions) versus our proposed sum-log-concave smooth XOR method at learning rock-paper-scissors rules.
  Each of the three items (rock, paper, scissors) is represented by a one-hot encoders of size 3.
  The dataset is composed of $\binom{3}{2} = 3$ pairwise comparisons corresponding to all possible duels.
  Both models are learned via gradient descent over 5000 epochs with learning rate set to 0.01 for the log-loss.
  The curves are averaged over $100$ seeds with every parameter independently initialized from the standard normal distribution. 
  }
  \label{fig:rock-paper-scissors_learning}
\end{figure}

Although our proposed smooth XOR classifier allows to learn cyclic rankings as shown in Figure \ref{fig:rock-paper-scissors_learning}, we stress the importance of the utilized random initialization of the weights. Indeed, since the log-loss of this classifier has a saddle point at zero (i.e., $- \nabla \log(\chi) (0,0) = 0$), it cannot be learned successfully by gradient descent if we initialize its weights with all-zeros vectors. On the other hand, since the Bradley-Terry model has a convex log-loss, it can be learned by gradient descent independently of the initial weights (even though its representational power remains limited to acyclic rankings).
This observation raises an algorithmic challenge, namely that of designing appropriate optimization algorithms for sum-log-concave methods that can escape from saddle-points and that can be learned for any initialization.
Before tackling this challenge in section \ref{sec:xgd}, we first introduce in the next section the class of functions that we propose to minimize and discuss some of their properties.

\section{Main assumption}
\label{sec:ass}

In this section, we introduce our proposed generalization of the notion of log-concavity, termed ``sum-log-concavity'', that is obtained by summing several log-concave functions.

\subsection{Sum-log-concavity} 
We now introduce the main assumption about the class of functions that we seek to minimize in the present study,
namely any function that is equal to the log-loss of a sum-log-concave model.

\begin{shadedassumption}[Negated Log of Sum-Log-Concave]
  \label{ass:sumLogConcave}
  There exists $S\ge 1$ such that the function $F : \mathbb{R}^m \mapsto \mathbb{R}$ is given by: 
  \begin{equation*}
  \forall \theta \in \mathbb{R}^m \ , \quad
    F(\theta) = - \log\left( \sum_{s=1}^S p_s(\theta) \right) \ ,
  \end{equation*}
  where each $p_s : \mathbb{R}^m \mapsto (0, \infty)$ is a differentiable log-concave function.
  Given $m\ge 1$, we denote by $\mathfrak{X}_m$ the family of such functions $F$.
\end{shadedassumption}

We point out that most of the notions and results presented in this paper for functions satisfying Assumption \ref{ass:sumLogConcave}
naturally extend to functions belonging to the conical hull of $\mathfrak{X}_m$.
For notational convenience and clarity of exposition, we restrict our attention to $\mathfrak{X}_m$.

\subsection{Closure properties}

As for the class of convex functions, the set $\mathfrak{X}_m$ (resp. $\mathfrak{X} := \bigcup_{m = 1}^{\infty} \mathfrak{X}_m$) is closed under summation (resp. affine reparameterization).
These closure properties are highly desirable in machine learning, where one typically aims at minimizing an objective function that writes as a sum of functions: $\mathcal{L}(A,b)=\sum_{i=1}^n f_i(A z_i + b)$ with $n$ the size of the dataset $(z_1,\dots,z_n)$, and where the argument of each $f_i$ is an affine transformation of the input $z_i$.

\begin{proposition}[Summation/Affine Closure]
\label{prop:closure}
The Assumption \ref{ass:sumLogConcave} is closed under summation and affine reparameterization.
    \begin{enumerate}[(i)]
        \item Summation. If $F_1,F_2 \in \mathfrak{X}_m$ , then $F_1 + F_2 \in \mathfrak{X}_m$.
        \item Tensor representation. Let $K\ge 1$ and, for each $k\in\{1,\dots,K\}$, $F_k = -\log(\sum_{s=1}^{S_K} p_{s}^{(k)}) \in \mathfrak{X}_m$ with $S_k\ge 1$. Then,
        \begin{equation*}
            F_1 +\dots + F_K = -\log\left( \sum_{1\le s_1\le S_1,\dots, 1\le s_K\le S_K} \tilde p_{s_1,\dots,s_K} \right) \quad \text{with} \quad \tilde p_{s_1,\dots,s_K} = p_{s_1}^{(1)} \times \dots \times p_{s_K}^{(K)} \ .
        \end{equation*}
        \item Affine reparameterization. If $F \in \mathfrak{X}_m$, $A\in \mathbb{R}^{m\times p}, b\in \mathbb{R}^{m}$, then $\tilde{F} : z\in\mathbb{R}^p \mapsto F(A z + b) $ is in $\mathfrak{X}_p$.
    \end{enumerate}
\end{proposition}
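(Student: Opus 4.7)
The plan is to verify the three parts directly from Assumption \ref{ass:sumLogConcave}, exploiting the fact that the logarithm turns products into sums. The key observation in all three cases is that the class of differentiable, strictly positive, log-concave functions on $\mathbb{R}^m$ is closed under both multiplication and affine precomposition; once this is established, the result for $\mathfrak{X}_m$ follows by manipulating the argument of the outer logarithm.

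For (i), I would write $F_1 = -\log(\sum_{s=1}^{S_1} p_s^{(1)})$ and $F_2 = -\log(\sum_{t=1}^{S_2} p_t^{(2)})$, and then use the identity
\begin{equation*}
F_1(\theta) + F_2(\theta)
= -\log\!\left( \Bigl(\sum_{s=1}^{S_1} p_s^{(1)}(\theta)\Bigr) \Bigl(\sum_{t=1}^{S_2} p_t^{(2)}(\theta)\Bigr) \right)
= -\log\!\left( \sum_{s,t} p_s^{(1)}(\theta)\, p_t^{(2)}(\theta) \right).
\end{equation*}
It remains to check that each cross term $p_s^{(1)} p_t^{(2)}$ belongs to the class of differentiable, positive, log-concave functions: differentiability and positivity are preserved under products, and log-concavity follows from $\log(p_s^{(1)} p_t^{(2)}) = \log p_s^{(1)} + \log p_t^{(2)}$, which is a sum of concave functions, hence concave. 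This exhibits $F_1 + F_2$ as an element of $\mathfrak{X}_m$ with $S_1 S_2$ summands.

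Part (ii) is an immediate induction on $K$ using (i), combined with the distributivity identity
\begin{equation*}
\prod_{k=1}^{K}\Bigl(\sum_{s_k=1}^{S_k} p_{s_k}^{(k)}\Bigr) = \sum_{1\le s_1\le S_1,\dots, 1\le s_K\le S_K} p_{s_1}^{(1)} \times \dots \times p_{s_K}^{(K)},
\end{equation*}
with each $\tilde p_{s_1,\dots,s_K}$ log-concave by iterating the product argument from (i).

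For (iii), I would fix the affine map $z \mapsto Az + b$ and consider each function $\tilde p_s : z \mapsto p_s(Az+b)$. Positivity is inherited pointwise; differentiability follows from the chain rule for the composition of a differentiable function with a smooth affine map; and log-concavity holds because $\log \tilde p_s(z) = (\log p_s)(Az+b)$ is the composition of a concave function with an affine map, which is well known to remain concave. Hence $\tilde F(z) = -\log\bigl(\sum_{s=1}^S \tilde p_s(z)\bigr)$ meets the requirements of Assumption \ref{ass:sumLogConcave} on $\mathbb{R}^p$, so $\tilde F \in \mathfrak{X}_p$.

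There is no genuinely hard step here; the main care is bookkeeping, namely checking that each of the three defining properties (strict positivity, differentiability, and log-concavity of the summands) is preserved under the two operations. The only mild subtlety is recognising that the number of summands can grow (to $S_1 S_2$ in (i), to $\prod_k S_k$ in (ii)), but this is harmless since Assumption \ref{ass:sumLogConcave} only requires the existence of some $S\ge 1$.
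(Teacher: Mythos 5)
Your proof is correct and follows essentially the same route as the paper's: part (i) via the identity $F_1+F_2 = -\log\bigl(\sum_{s,t} p^{(1)}_s p^{(2)}_t\bigr)$ together with closure of log-concavity under products, part (ii) by distributing the product of sums, and part (iii) by the invariance of concavity under affine precomposition. The only difference is that you spell out the preservation of positivity and differentiability, which the paper leaves implicit.
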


\begin{proof}
\textbf{(i) Summation.}
The summation closedness property of $\mathfrak{X}_m$ is a simple consequence of the fact that the product of two log-concave functions is still log-concave.
Indeed, if $F_1 = -\log(\sum_{s=1}^{S_1} p_s)$ and $F_2 = -\log(\sum_{s=1}^{S_2} q_s)$, then $F_1 + F_2 = -\log(\sum_{s=1}^{S_1} \sum_{s'=1}^{S_2} p_s q_{s'})$ involves a sum of $S_1 \times S_2$ log-concave functions.

\textbf{(ii) Tensor representation.}
We have:
\begin{equation*}
    F_1 +\dots + F_K = -\log\left( \prod_{k=1}^K \left( \sum_{s=1}^{S_k} p^{(k)}_s \right) \right)
    = -\log\left( \sum_{1\le s_1\le S_1,\dots, 1\le s_K\le S_K} p_{s_1}^{(1)} \times \dots \times p_{s_K}^{(K)} \right) \ .
\end{equation*}

\textbf{(iii) Affine reparameterization.}
Follows from the invariance of the notion of concavity under affine reparameterization: indeed, the concavity of $\log(p_s)$ implies that $z\in\mathbb{R}^p \mapsto \log p_s(A z + b)$ is concave.
Hence, $z\in\mathbb{R}^p \mapsto p_s(A z + b)$ is log-concave and thus $\tilde F \in \mathfrak{X}_p$.



\end{proof}

The summation closure property Proposition \ref{prop:closure}-(i) generalizes the fact that a sum of convex functions is still convex. For instance, this is not true for the class of quasi-convex functions that is studied in \cite{hazan2015beyond}.

\section{The cross gradient descent algorithm}
\label{sec:xgd}

In this section, we introduces a novel algorithm tailored to sum-log-concave optimization.
We start by extending the standard notion of convexity to our sum-log-concave framework.
In particular, we will see that a specific vector, that we call ``cross gradient'', plays a role similar to the classic gradient in convex optimization.
This will motivate our new ``Cross Gradient Descent'' algorithm (shortened ``XGD'') for which we provide a convergence analysis.

\subsection{Cross gradients}
\label{subsec:cross_grad}

Let us introduce the notion of cross gradient that will appear (in place of the standard gradient) in our generalized convexity inequalities.

\begin{shadeddefinition}[Cross gradient]
    \label{def:cross_grad}
    Let $F : \mathbb{R}^m \mapsto \mathbb{R}$ be a function satisfying Assumption \ref{ass:sumLogConcave}.
    \begin{enumerate}[(i)]
        \item For any $\theta, \eta \in \mathbb{R}^m$, we define the ``cross-gradient of $F$ at $\theta$ seen from $\eta$'' as follows:
    \begin{equation*}
        \nabla^{\eta} F(\theta) = - \sum_{s=1}^S \frac{p_{s}(\eta)}{ \sum_{s'} p_{s'}(\eta) } \frac{\nabla p_s( \theta ) }{p_s( \theta )} \ .
    \end{equation*}
    \item Similarly for any probability law $\mu=(\mu_1,\dots,\mu_S)\in \Delta_S$, we overload the cross gradient notation by defining:
    \begin{equation*}
        \nabla^{\mu} F(\theta) = - \sum_{s=1}^S \mu_s \frac{\nabla p_s( \theta ) }{p_s( \theta )} \ .
    \end{equation*}
    \end{enumerate}
\end{shadeddefinition}

In particular for $\eta=\theta$, the cross gradient is simply equal to the usual gradient: $\nabla^{\theta} F(\theta) = \nabla F(\theta)$.
In the ``trivial'' case $S=1$, the function $F$ is convex and its cross gradient is always equal to the classic gradient: $\nabla^{\eta} F(\theta) = \nabla F(\theta)$ for all $\theta,\eta \in \mathbb{R}^m$.
In the general case, notice that $\nabla^{\eta} F : \theta \mapsto \nabla^{\eta} F(\theta) $ can be viewed as a vector field parameterized by $\eta$.
We call this new operation ``cross gradient'' because it blends the influence of both points $\theta$ and $\eta$, and, as will be shown in the proof of Lemma \ref{lem:cross_cvx}, it naturally derives from a cross entropy term.
Furthermore, the cross gradient corresponds to an average of the gradients of the ``partial losses'' $\ell_s(\theta) = -\log(p_s(\theta))$:
indeed, $\nabla \ell_s(\theta) = - \frac{\nabla p_s(\theta)}{ p_s(\theta)}$.
The cross gradient operator verifies an important linearity property as explained below. The proof is also provided here as it involves insightful manipulation of functions belonging to the class $\mathfrak{X}_m$.

\begin{proposition}[Linearity]
\label{prop:linearity}
\begin{enumerate}[(i)]
    \item Given some reference point $\eta \in \mathbb{R}^m$, the cross gradient operator $\nabla^\eta$ is linear in the sense that if $F, G \in \mathfrak{X}_m$, then $\nabla^\eta ( F + G ) = \nabla^\eta F + \nabla^\eta G $.
    \item Let $K\ge 1$ and, for each $k\in\{1,\dots,K\}$, $F_k = -\log(\sum_{s=1}^{S_K} p_{s}^{(k)}) \in \mathfrak{X}_m$ with $S_k\ge 1$, and $\mu^{(k)} \in \Delta_{S_k}$. Then,
    \begin{equation*}
        \nabla^{\mu^{(1)} \otimes \dots \otimes \mu^{(K)}}( F_1 + \dots + F_K ) = \nabla^{\mu^{(1)}} F_1 + \dots + \nabla^{\mu^{(K)}} F_K \ ,
    \end{equation*}
    where we use the tensor representation from Proposition \ref{prop:closure}-(ii) for the sum $\sum_{k} F_k$,
    and the outer product $\mu^{(1)} \otimes \dots \otimes \mu^{(K)}$ can be interpreted as an element of the probability simplex $\Delta_{S_1\times \dots \times S_K}$.
\end{enumerate}
\end{proposition}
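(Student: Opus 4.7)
The plan is to prove part (ii) first, since part (i) will drop out as the special case $K=2$ with $\mu^{(1)}_s = p_s(\eta)/\sum_t p_t(\eta)$ and $\mu^{(2)}_{s'} = q_{s'}(\eta)/\sum_{t'} q_{t'}(\eta)$. The core observation that makes everything work is the log-derivative rule: for the product $\tilde p_{s_1,\dots,s_K}(\theta) = \prod_{k=1}^K p_{s_k}^{(k)}(\theta)$, one has
\begin{equation*}
  \frac{\nabla \tilde p_{s_1,\dots,s_K}(\theta)}{\tilde p_{s_1,\dots,s_K}(\theta)}
  = \nabla \log \tilde p_{s_1,\dots,s_K}(\theta)
  = \sum_{k=1}^K \frac{\nabla p_{s_k}^{(k)}(\theta)}{p_{s_k}^{(k)}(\theta)} \ .
\end{equation*}
This turns the single ``gradient of a big product'' into a sum of ``partial'' terms, one per $k$.

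For part (ii), I would first invoke the tensor representation from Proposition \ref{prop:closure}-(ii) to write $F_1+\dots+F_K = -\log( \sum_{s_1,\dots,s_K} \tilde p_{s_1,\dots,s_K} )$, and note that $\mu^{(1)}\otimes\dots\otimes\mu^{(K)}$ is indeed an element of $\Delta_{S_1\times\dots\times S_K}$ since its entries are nonnegative and sum to $\prod_k(\sum_{s_k}\mu^{(k)}_{s_k}) = 1$. Plugging into Definition \ref{def:cross_grad}-(ii) and applying the identity above yields
\begin{equation*}
  \nabla^{\mu^{(1)}\otimes\dots\otimes\mu^{(K)}}(F_1+\dots+F_K)(\theta)
  = - \sum_{k=1}^K \sum_{s_1,\dots,s_K} \Big(\prod_{j=1}^K \mu^{(j)}_{s_j}\Big)\frac{\nabla p_{s_k}^{(k)}(\theta)}{p_{s_k}^{(k)}(\theta)} \ .
\end{equation*}
For each fixed $k$, the summand only depends on $s_k$, so the marginalization $\sum_{s_j} \mu^{(j)}_{s_j} = 1$ for every $j\neq k$ collapses the inner sum to $\sum_{s_k} \mu^{(k)}_{s_k} \nabla p_{s_k}^{(k)}(\theta)/p_{s_k}^{(k)}(\theta) = -\nabla^{\mu^{(k)}} F_k(\theta)$. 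Summing over $k$ gives the claim.

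For part (i), I would deduce it directly from part (ii) applied to $K=2$ and the specific reference weights induced by $\eta$: setting $\mu^{(1)}_s = p_s(\eta)/\sum_t p_t(\eta)$ and $\mu^{(2)}_{s'} = q_{s'}(\eta)/\sum_{t'} q_{t'}(\eta)$, the tensor product $\mu^{(1)}\otimes\mu^{(2)}$ has $(s,s')$-entry $\frac{p_s(\eta)q_{s'}(\eta)}{(\sum_t p_t(\eta))(\sum_{t'} q_{t'}(\eta))} = \frac{p_s(\eta)q_{s'}(\eta)}{\sum_{t,t'} p_t(\eta)q_{t'}(\eta)}$, which is exactly the reference weight induced by $\eta$ in the product representation of $F+G$ from Proposition \ref{prop:closure}-(i). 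Part (ii) then gives $\nabla^\eta(F+G)(\theta) = \nabla^\eta F(\theta) + \nabla^\eta G(\theta)$.

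No step here presents a real obstacle; the proof is essentially bookkeeping. The one place that requires care is verifying that the reference weight used by $\nabla^\eta$ on the product representation of $F+G$ factors as a genuine tensor product of the individual reference weights on $F$ and $G$ — this is what makes part (i) a clean corollary of part (ii) rather than a separate argument, and it relies on the fact that $\sum_{s,s'} p_s(\eta)q_{s'}(\eta)$ factorizes as $(\sum_s p_s(\eta))(\sum_{s'} q_{s'}(\eta))$.
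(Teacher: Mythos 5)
Your proof is correct and rests on exactly the same two ingredients as the paper's: the log-derivative rule $\nabla\log\prod_k p^{(k)}_{s_k}=\sum_k \nabla p^{(k)}_{s_k}/p^{(k)}_{s_k}$ and the marginalization $\sum_{s_j}\mu^{(j)}_{s_j}=1$ for $j\neq k$. The only difference is organizational — the paper proves (i) and (ii) by two separate but parallel computations, whereas you derive (i) as the $K=2$ case of (ii) after checking the weight factorization $\frac{p_s(\eta)q_{s'}(\eta)}{\sum_{t,t'}p_t(\eta)q_{t'}(\eta)}=\frac{p_s(\eta)}{\sum_t p_t(\eta)}\cdot\frac{q_{s'}(\eta)}{\sum_{t'}q_{t'}(\eta)}$, which is a valid and slightly more economical packaging of the same argument.
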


\begin{proof}
\textbf{(i) Given $\eta$.}
    As $F,G\in \mathfrak{X}_m$, they can be written as follows:
    \begin{equation*}
        \begin{cases}
        F(\theta) = -\log\left( \sum_{s=1}^S p_s(\theta) \right) \\
        G(\theta) = -\log\left( \sum_{r=1}^{R} q_r(\theta) \right)
        \end{cases} ,
    \end{equation*} 
where $p_s, q_r$ are log-concave functions.
Then, the sum also belongs to $\mathfrak{X}_m$ and involves a sum of $S\times R$ log-concave functions:
\begin{equation*}
    F(\theta) + G(\theta) 
    = -\log\left( \left( \sum_{s=1}^S p_s(\theta) \right) \left( \sum_{r=1}^{R} q_{r}(\theta) \right) \right)
    = -\log\left( \sum_{s=1}^S \sum_{r=1}^{R} p_s(\theta) q_{r}(\theta) \right) \ .
\end{equation*}
Finally, we deduce that the cross gradient of $F+G$ is equal to
\begin{multline*}
    \nabla^\eta ( F + G )(\theta)
    = - \sum_{s=1}^S \sum_{r=1}^{R} \frac{p_s(\eta) q_{r}(\eta)}{ \left(\sum_{s'} p_{s'}(\eta) \right) \left( \sum_{r'} q_{r'}(\eta) \right) } \left[ \frac{\nabla p_s( \theta ) }{p_s( \theta )}
    + \frac{\nabla q_r( \theta ) }{q_r( \theta )} \right] \\
    = - \underbrace{\sum_{r=1}^{R} \frac{q_r(\eta) }{ \sum_{r'} q_{r'}(\eta) } }_{1} \sum_{s=1}^S \frac{p_s(\eta) }{ \sum_{s'} p_{s'}(\eta) }  \frac{\nabla p_s( \theta ) }{p_s( \theta )} 
    - \underbrace{\sum_{s=1}^{S} \frac{p_s(\eta) }{ \sum_{s'} p_{s'}(\eta) } }_{1} \sum_{r=1}^R \frac{q_r(\eta) }{ \sum_{r'} q_{r'}(\eta) }  \frac{\nabla q_r( \theta ) }{q_r( \theta )} 
    = \nabla^\eta F(\theta) + \nabla^\eta G(\theta) \ ,
\end{multline*}
where we have used the fact that $\nabla \log(p_s q_r) = \nabla \log(p_s) + \nabla \log(q_r) 
= \frac{\nabla p_s }{p_s} + \frac{\nabla q_r }{q_r} $ .

\textbf{(ii) Given $\mu$'s.}
By the tensor representation from Proposition \ref{prop:closure}-(ii),
\begin{equation*}
        F_1 +\dots + F_K = -\log\left( \sum_{1\le s_1\le S_1,\dots, 1\le s_K\le S_K} \tilde p_{s_1,\dots,s_K} \right) \quad \text{with} \quad \tilde p_{s_1,\dots,s_K} = p_{s_1}^{(1)} \times \dots \times p_{s_K}^{(K)} \ .
\end{equation*}
Hence,
\begin{multline*}
\nabla^{\mu^{(1)} \otimes \dots \otimes \mu^{(K)}}( F_1 + \dots + F_K )(\theta)
= - \sum_{1\le s_1\le S_1,\dots, 1\le s_K\le S_K}
\mu^{(1)}_{s_1}\times \dots \times \mu^{(K)}_{s_K} \left[ \frac{\nabla p^{(1)}_{s_1}(\theta)}{p^{(1)}_{s_1}(\theta)} + \dots + \frac{\nabla p^{(K)}_{s_K}(\theta)}{p^{(K)}_{s_K}(\theta)} \right] \\
= - \sum_{k=1}^K \prod_{j\neq k} \underbrace{ \sum_{1\le s_j\le S_j} \mu^{(j)}_{s_j} }_{ 1 } \sum_{1\le s_k\le S_k} \mu^{(k)}_{s_k} \frac{\nabla p^{(k)}_{s_k}(\theta)}{p^{(k)}_{s_k}(\theta)}
= \nabla^{\mu^{(1)}} F_1(\theta) + \dots + \nabla^{\mu^{(K)}} F_K(\theta) \ ,
\end{multline*}
where we used the fact that $\frac{\nabla \tilde p_{s_1,\dots,s_K}}{\tilde p_{s_1,\dots,s_K}} = \frac{\nabla p^{(1)}_{s_1}}{p^{(1)}_{s_1}} + \dots + \frac{\nabla p^{(K)}_{s_K}}{p^{(K)}_{s_K}}$ \ .
    
\end{proof}

\paragraph{Cross Gradient Descent (XGD)}
The new notion of cross gradient that we have introduced in Definition \ref{def:cross_grad} motivates our proposed XGD algorithm that we now describe formally.
Given a function $F$ satisfying Assumption \ref{ass:sumLogConcave}, a hyperparameter probability distribution $\mu \in \Delta_S$ (from which all cross gradients will be ``seen''), and an initial point $\theta_0$, the XGD iterates are defined as follows: $\forall t\ge 1$,
\begin{equation}
  \label{eq:XGD}
  \boxed{
  \theta_{t} \leftarrow \theta_{t-1} - \gamma_t \nabla^{\mu} F(\theta_{t-1}) = \theta_{t-1} + \gamma_t \sum_{s=1}^S \mu_s \frac{\nabla p_s( \theta_{t-1} ) }{p_s( \theta_{t-1} )}
  } \ 
  \tag{XGD update}
\end{equation}

\subsection{Cross convexity}
\label{subsec:cross_convexity}

We are now ready to state one of our main contribution, namely a generalization of the notion of convexity to the class of functions given by the log-loss of any sum-log-concave function.

\begin{shadedlemma}[Cross convexity]
  \label{lem:cross_cvx}
  Consider a function $F : \mathbb{R}^m \mapsto \mathbb{R}$ satisfying Assumption \ref{ass:sumLogConcave}.
  \begin{enumerate}[(i)]
      \item For all $\theta,\eta \in \mathbb{R}^m$,
  \begin{equation*}
    F(\eta) - F(\theta) \ge 
    \nabla^{\eta} F(\theta)^\top ( \eta - \theta ) 
    +  D_{\text{KL}}\left( \left[ \frac{p_{s}(\eta)}{\sum_{s'} p_{s'}(\eta)} \right]_s \bigg\| \left[ \frac{p_s(\theta)}{\sum_{s'} p_{s'}(\theta)} \right]_s \right)
    \ .
  \end{equation*}
  \item More generally, for any distribution $\mu \in \Delta_S$: $\forall \theta,\eta \in \mathbb{R}^m$,
  \begin{equation*}
    F(\eta) - F(\theta) \ge 
    \nabla^{\mu} F(\theta)^\top ( \eta - \theta ) 
    +  D_{\text{KL}}\left( \mu \bigg\| \left[ \frac{p_s(\theta)}{\sum_{s'} p_{s'}(\theta)} \right]_s \right)
    - D_{\text{KL}}\left( \mu \bigg\| \left[ \frac{p_{s}(\eta)}{\sum_{s'} p_{s'}(\eta)} \right]_s \right)
    \ .
  \end{equation*}
  \end{enumerate}
\end{shadedlemma}

\begin{proof}
Let us lower bound the following quantity:
    \begin{multline}
    F(\eta) - F(\theta) 
    -  D_{\text{KL}}\left( \mu \bigg\| \left[ \frac{p_s(\theta)}{\sum_{s'} p_{s'}(\theta)} \right]_s \right)
    + D_{\text{KL}}\left( \mu \bigg\| \left[ \frac{p_{s}(\eta)}{\sum_{s'} p_{s'}(\eta)} \right]_s \right) \\
    = - \log\left( \frac{ \sum_{s=1}^S p_s(\eta) }{ \sum_{s=1}^S p_s(\theta) } \right)
    - \sum_{s=1}^S \mu_s \log\left( \frac{ p_s(\eta) \bigg/\sum_{s'} p_{s'}(\eta) }{ p_s(\theta) \bigg/\sum_{s'} p_{s'}(\theta) } \right)
    = - \sum_{s=1}^S \mu_s \log\left( \frac{ p_s(\eta) }{ p_s(\theta) } \right) \ .
  \end{multline}
  Then, by applying $S$ convexity inequalities,
  \begin{equation}
    - \sum_{s=1}^S \mu_s \log\left( \frac{ p_s(\eta) }{ p_s(\theta) } \right)
    \ge - \sum_{s=1}^S \mu_s \frac{ \nabla p_s(\theta)^\top ( \eta - \theta ) }{ p_s(\theta) }
    = \nabla^{\mu} F(\theta)^\top ( \eta - \theta ) \ .
  \end{equation}
    
\end{proof}

In particular in the case $S=1$, Lemma \ref{lem:cross_cvx} reduces to a classic convexity inequality since the KL terms are equal to zero and $\nabla^{\mu} F(\theta) = \nabla F(\theta)$.
In the general case $S\ge 2$, we say that the function $F$ is \emph{cross convex} since it satisfies a variant of convexity involving the cross gradient (instead of the gradient in standard convexity) and a ``penalization'' equal to the difference of two KL divergences.
In fact, this ``penalty'' term may be either positive or negative.
In particular for the first inequality (i), the positive sign in front of the KL strengthens the linear lower bound instead of relaxing it: this will allow us to prove the convergence of our new algorithm leveraging the cross convexity of $F$.

\subsection{Convergence analysis}

Given $\mu = (\mu_1,\dots,\mu_S) \in \Delta_S$ , let us define
\begin{equation*}
\mathcal{E}(\mu) = 
\left\{ \eta \quad \text{s.t.} \quad  \nabla^{\eta} F = \nabla^{\mu} F \right\} \ .
\end{equation*}
This subset of $\mathbb{R}^m$ contains all points $\eta$ that share the same cross gradient vector field as the one produced by $\mu$, i.e. for all $\eta \in \mathcal{E}(\mu)$, $\nabla^{\eta} F = \nabla^\mu F$. In particular, if the probability law generated by $\eta$ is equal to $\mu$, then $\eta$ belongs to $\mathcal{E}(\mu)$:
\begin{equation*}
    \left\{ \eta \quad \text{s.t.} \quad  \forall s, \ \frac{p_{s}(\eta)}{\sum_{s'} p_{s'}(\eta)} = \mu_s \right\} \subseteq \mathcal{E}(\mu) \ .
\end{equation*}

\begin{theorem}[Convergence of XGD]
  \label{thm:GD}
  Let $\mu \in \Delta_S$ ($S\ge 1$).
  Consider a function $F : \mathbb{R}^m \mapsto \mathbb{R}$ satisfying Assumption \ref{ass:sumLogConcave}, such that every partial loss function $\ell_s = -\log p_s$ is $B$-Lipschitz continuous.
  Then, for $T\ge 1$ iterations of XGD given by Eq. (\ref{eq:XGD}) with learning rates $\gamma_t > 0$, it holds: 
  \begin{equation*}
  \forall \eta \in \mathcal{E}(\mu) \ , \qquad 
    \frac{1}{ \sum_{t=1}^T \gamma_t } \sum_{t=1}^T \gamma_t \left( F(\theta_{t-1}) - F(\eta) \right) 
    \le \frac{ \| \theta_{0} - \eta \|^2 }{2 \sum_{t=1}^T \gamma_t } 
    + B^2 \frac{ \sum_{t=1}^T \gamma_t^2 }{ 2 \sum_{t=1}^T \gamma_t } \ .
  \end{equation*}
\end{theorem}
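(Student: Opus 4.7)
The plan is to mimic the textbook convergence proof of projected/subgradient descent for convex Lipschitz functions, with the cross convexity inequality (Lemma \ref{lem:cross_cvx}-(i)) taking the place of the standard convexity inequality. The hypothesis $\eta \in \mathcal{E}(\mu)$ is exactly what makes this substitution work, so I first want to explain how it enters the argument. By the very definition of $\mathcal{E}(\mu)$, for any comparator $\eta \in \mathcal{E}(\mu)$ we have $\nabla^{\eta} F(\theta_{t-1}) = \nabla^{\mu} F(\theta_{t-1})$. Plugging this into Lemma \ref{lem:cross_cvx}-(i) and using the nonnegativity of the Kullback-Leibler divergence yields the one-step inequality
\begin{equation*}
F(\theta_{t-1}) - F(\eta) \le \nabla^{\mu} F(\theta_{t-1})^\top (\theta_{t-1} - \eta),
\end{equation*}
which is the sum-log-concave analogue of the usual ``$f(\theta)-f(\eta)\le \nabla f(\theta)^\top(\theta-\eta)$'' for convex $f$.

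Next I would carry out the classical potential argument on the squared distance to $\eta$. Expanding $\|\theta_t - \eta\|^2$ using the XGD update yields
\begin{equation*}
\|\theta_t-\eta\|^2 = \|\theta_{t-1}-\eta\|^2 - 2\gamma_t\, \nabla^{\mu} F(\theta_{t-1})^\top (\theta_{t-1}-\eta) + \gamma_t^2\,\|\nabla^{\mu} F(\theta_{t-1})\|^2,
\end{equation*}
which rearranges into
\begin{equation*}
\gamma_t\, \nabla^{\mu} F(\theta_{t-1})^\top (\theta_{t-1}-\eta) = \tfrac{1}{2}\bigl(\|\theta_{t-1}-\eta\|^2 - \|\theta_t-\eta\|^2\bigr) + \tfrac{\gamma_t^2}{2}\|\nabla^{\mu} F(\theta_{t-1})\|^2.
\end{equation*}
Combining with the cross-convex one-step inequality above, multiplied by $\gamma_t$, gives
\begin{equation*}
\gamma_t\bigl(F(\theta_{t-1}) - F(\eta)\bigr) \le \tfrac{1}{2}\bigl(\|\theta_{t-1}-\eta\|^2 - \|\theta_t-\eta\|^2\bigr) + \tfrac{\gamma_t^2}{2}\|\nabla^{\mu} F(\theta_{t-1})\|^2.
\end{equation*}

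The remaining step is to control the cross-gradient norm uniformly. Because $\nabla^{\mu} F(\theta) = \sum_{s}\mu_s \nabla \ell_s(\theta)$ is a convex combination of the partial-loss gradients $\nabla \ell_s$, and each $\ell_s$ is $B$-Lipschitz (so $\|\nabla \ell_s\|\le B$), the triangle inequality yields $\|\nabla^{\mu} F(\theta)\|\le B$ for every $\theta$. Summing the per-step inequality from $t=1$ to $T$, telescoping the distance terms, dropping the nonnegative $\|\theta_T-\eta\|^2$ at the end, and dividing by $\sum_{t=1}^T \gamma_t$ delivers the stated bound.

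I expect the genuinely delicate step to be the first one: recognizing that the $\mathcal{E}(\mu)$ hypothesis is the minimal condition that reconciles the direction of motion (the \emph{constant-in-$\eta$} vector $\nabla^{\mu} F$ used by XGD) with the \emph{$\eta$-dependent} lower bound furnished by cross convexity. Once that pivot is made, the rest is the standard telescoping-plus-Lipschitz-bound machinery, and no extra properties of $F$ (such as smoothness or convexity of $F$ itself) are needed.
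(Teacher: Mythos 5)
Your proposal is correct and follows essentially the same route as the paper's own proof: invoke cross convexity (Lemma \ref{lem:cross_cvx}-(i)) with the identification $\nabla^{\eta}F=\nabla^{\mu}F$ granted by $\eta\in\mathcal{E}(\mu)$, drop the nonnegative KL term, and run the standard telescoping argument with $\|\nabla^{\mu}F\|\le B$. You even make explicit two steps the paper leaves implicit, namely the substitution of $\nabla^{\mu}F$ for $\nabla^{\eta}F$ in the lemma and the convex-combination bound $\|\nabla^{\mu}F(\theta)\|\le\sum_s\mu_s\|\nabla\ell_s(\theta)\|\le B$.
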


\begin{proof}

Let us consider $\eta \in \mathcal{E}(\mu)$.
For $t\ge 1$, we have:
\begin{equation*}
    \| \theta_t - \eta \|^2
    = \| \theta_{t-1} - \gamma_t \nabla^{\mu} F(\theta_{t-1}) - \eta \|^2 \\
    = \| \theta_{t-1} - \eta \|^2 - 2 \gamma_t \langle \nabla^{\mu} F(\theta_{t-1}) , \theta_{t-1} - \eta \rangle  + \gamma_t^2 \| \nabla^{\mu} F(\theta_{t-1}) \|^2 \ .
\end{equation*}
Then, the cross convexity inequality from Lemma \ref{lem:cross_cvx}-(i)
tells us that:
\begin{equation*}
    F(\eta) - F(\theta_{t-1}) \ge \nabla^{\mu} F(\theta_{t-1})^\top ( \eta - \theta_{t-1} ) + D_{\text{KL}}\left( \left[ \frac{p_s(\eta)}{\sum_{s'} p_{s'}(\eta)} \right]_s \bigg\| \left[ \frac{p_s(\theta_{t-1})}{\sum_{s'} p_{s'}(\theta_{t-1})} \right]_s \right) \ ,
\end{equation*}
which implies that
\begin{equation*}
    F(\theta_{t-1}) - F(\eta) \le \nabla^{\mu} F(\theta_{t-1})^\top ( \theta_{t-1} - \eta )  .
\end{equation*}
By using the boundedness of the gradient of each $\ell_s$, we deduce that
\begin{equation*}
     \gamma_t \left( F(\theta_{t-1}) - F(\eta) \right) 
    \le \frac{1}{2} \left( \| \theta_{t-1} - \eta \|^2  - \| \theta_t - \eta \|^2 \right)
    + \frac{1}{2} \gamma_t^2 B^2 \ .
\end{equation*}
Then, by summing these inequalities, we obtain:
\begin{equation*}
     \frac{1}{ \sum_{t=1}^T \gamma_t } \sum_{t=1}^T \gamma_t \left( F(\theta_{t-1}) - F(\eta) \right) 
    \le \frac{ \| \theta_{0} - \eta \|^2 }{2 \sum_{t=1}^T \gamma_t } 
    + B^2 \frac{ \sum_{t=1}^T \gamma_t^2 }{ 2 \sum_{t=1}^T \gamma_t } \ .
\end{equation*}

\end{proof}

\begin{remark}
  \label{ex:S1}
  In the case $S=1$, the function $F$ is convex and XGD coincides with gradient descent (GD). Theorem \ref{thm:GD}
  then corresponds to the classic analysis of GD for a convex function, and the upper bound is of order $\mathcal{O}(\log T/\sqrt{T})$ for decaying learning rates $\gamma_t$ of order $1/\sqrt{t}$ (see e.g. \cite{bach2021learning}).
  In our more general setting, the same guarantee can be obtained over any bounded subset $\mathcal{E}(\mu) \cap \mathcal{B}(\theta_0, D)$ by choosing $\gamma_t = \frac{D}{B \sqrt{t}}$.
\end{remark}

\section{Application: the checkered regression classifier}
\label{sec:CR}

This section formally introduces the checkered regression method along with a few elementary properties.
As will be seen, this sum-log-concave model allows to generalize multi-class logistic regression to non-linearly separable problems.
We start by introducing the binary version before moving to the multiclass setting.

\subsection{Definition and immediate properties}
\label{subsec:def}

Let $m\ge 1$. We start with the definition below.

\begin{definition}[Checkoid function]
  \label{def:checkoid}
  The checkoid function $\Xi_m$ is defined for all $z=(z_1,\dots,z_m)\in\mathbb{R}^m$ by\footnote{We recall that the hyperbolic tangent function is defined on $\mathbb{R}$ as $\tanh(x) = (e^x - e^{-x})/(e^x + e^{-x}) \in (-1,1)$.}
\begin{equation*}
    \Xi_m(z) = \frac{1}{2} \left( 1+\prod_{k=1}^m \tanh\left(\frac{z_k}{2}\right) \right) \ . 
\end{equation*}
\end{definition}

Obviously, the checkoid function is permutation-invariant: for any permutation $\tau$ of $\{1,\dots,m\}$,
$ \Xi_m(z_{\tau(1)},\dots,z_{\tau(m)}) = \Xi_m(z_1,\dots,z_m) $.
We now describe an important ``anti-symmetry'' property of our model that characterizes its checkered structure.

\begin{property}[Anti-symmetry]
  \label{property:antisym}
  Given $z=(z_1,\dots,z_m)\in\mathbb{R}^m$, let $k\in\{1,\dots,m\}$ and $z'=(z'_1,\dots,z'_m)$ with $z'_k=-z_k$ and $z'_j = z_j$ for $j\neq k$. Then,
    \begin{equation*}
      \Xi_m(z') = 1-\Xi_m(z) .
    \end{equation*}
\end{property}

\begin{proof}
  (i) is trivial while (ii) follows from the oddness of the hyperbolic tangent.
\end{proof}

Property \ref{property:antisym} generalizes the standard identity $\sigma(-x)=1-\sigma(x)$ with $\sigma(x)=1/(1+e^{-x})$ the sigmoid function.
Indeed in the case $m=1$, the checkoid coincides with the sigmoid.

\begin{example}[Case $m=1$: Sigmoid]
  For $m=1$, $\Xi_1(z)= (1/2)(1+\tanh(z/2)) =\sigma(z)$.
  In this case, the checkered regression (defined from the checkoid equipped with the log-loss) boils down to a logistic regression.
\end{example}

\begin{example}[Case $m=2$: Smooth XOR]
  \label{ex:xor}
  If $m=2$, one can easily show that the checkoid function is equal to
  \begin{equation*}
    \Xi_2(z_1,z_2) = \frac{1}{2} \left( 1+ \tanh\left(\frac{z_1}{2}\right) \tanh\left(\frac{z_2}{2}\right) \right) = \frac{\sigma(z_1)\sigma(z_2)}{\sigma(z_1+z_2)} \ .
  \end{equation*}
  Furthermore, $\Xi_2$ can be interpreted as a smooth XOR gate\footnote{For Boolean variables $A,B\in\{0,1\}$, $\text{XOR}(A,B)=A(1-B)+(1-A)B$.}, where the Boolean variables are replaced by continuous sigmoids. Indeed,
  \begin{equation*}
    \Xi_2(z_1,z_2) = \sigma(z_1) \sigma(z_2) + (1- \sigma(z_1)) (1- \sigma(z_2)) = \text{XOR}(\sigma(z_1), 1- \sigma(z_2)) \ ,
  \end{equation*}
  where we extended the XOR formula to continuous inputs in the interval $(0,1)$.
\end{example}

Figure \ref{fig:Xi2} illustrates the checkered pattern produced by the bivariate checkoid function ($m=2$).
Indeed, the decision regions are given by the checkerboard formed by the two coordinate axes:
\begin{equation}
\text{Sign}\left( \Xi_2( z_1, z_2 ) - \frac12 \right) = \text{Sign}( z_1 \cdot z_2 ) \ .
\end{equation}
Moreover, $\Xi_2(z_1,z_2)$ is equal to $1/2$ if and only if $z_1=0$ or $z_2=0$.

\begin{figure}[htbp]
    \centering
    \begin{minipage}{.5\textwidth}
    \centering
    \includegraphics[width=\textwidth]{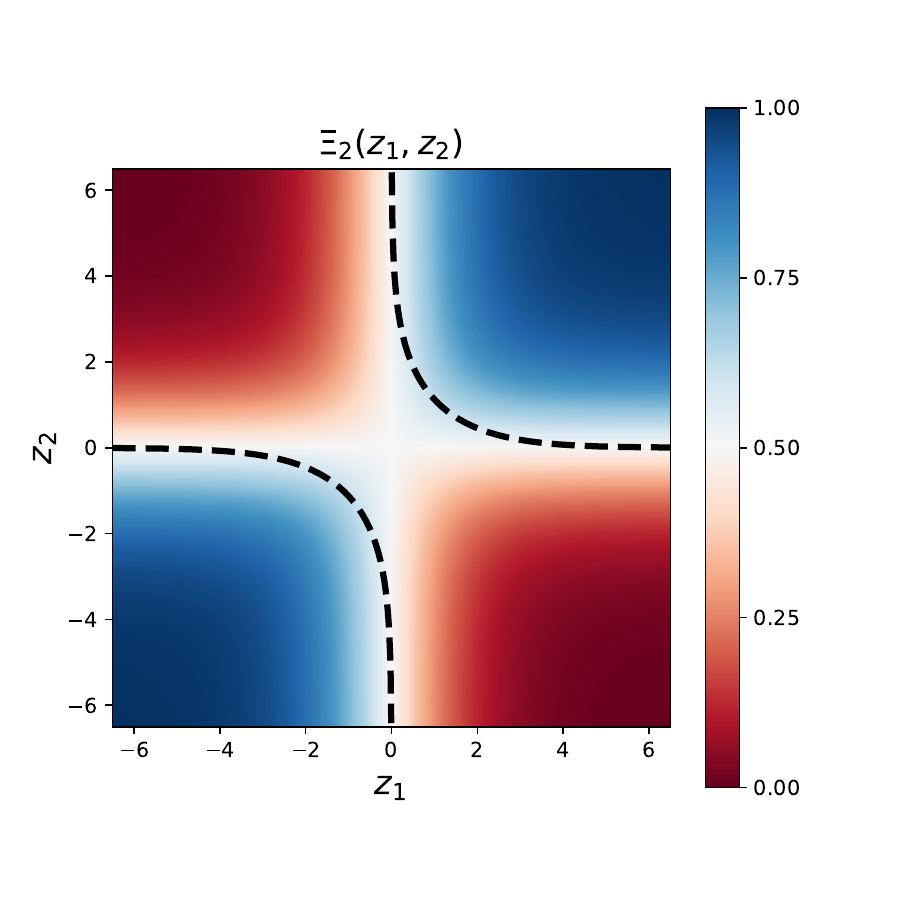}
    \end{minipage}%
    \begin{minipage}{.5\textwidth}
    \centering
    \caption{Heatmap of the bivariate checkoid function $\Xi_2(z_1,z_2)$ taking values in the interval $(0,1)$ and equal to $\frac12$ over the union of the coordinate axes. The dotted lines correspond to the frontier ``$\det(H(z_1,z_2))=0$'' (with $H$ the Hessian matrix of $-\log \Xi_2$) that forms a quartic algebraic curve in $X=e^{z_1}$, $Y=e^{z_2}$, defined by the roots of the bivariate polynomial $P(X,Y) = X^2 Y^2 - X^2 Y - X Y^2 - 2XY - X - Y + 1 = X^2 Y^2 P(1/X,1/Y)$.
    In fact, this loss function is convex in the top-right (resp. bottom-left) corner delimited by the top-right (resp. bottom-left) dotted line.
  }
    \label{fig:Xi2}
    \end{minipage}
\end{figure}

\paragraph{The checkered regression}

Equipped with the checkoid function introduced earlier,
we can now define a new non-linear model for binary classification.

\begin{definition}[Checkered regression]
  \label{def:checkreg}
  Let $\mathcal{X} \subseteq \mathbb{R}^d$ ($d\ge 1$).
  The checkered regression model with parameters $\omega=(\omega_1,\dots,\omega_m) \in \mathbb{R}^{dm}$
  is given by the posterior probabilities
\begin{equation*}
    p_\omega\left( 0 | x \right) = 1 - p_\omega\left( 1 | x \right) = \Xi_m(\omega_1^\intercal x,\dots,\omega_m^\intercal x) \quad \text{ for all } x\in\mathcal{X} \ .
\end{equation*}
\end{definition}

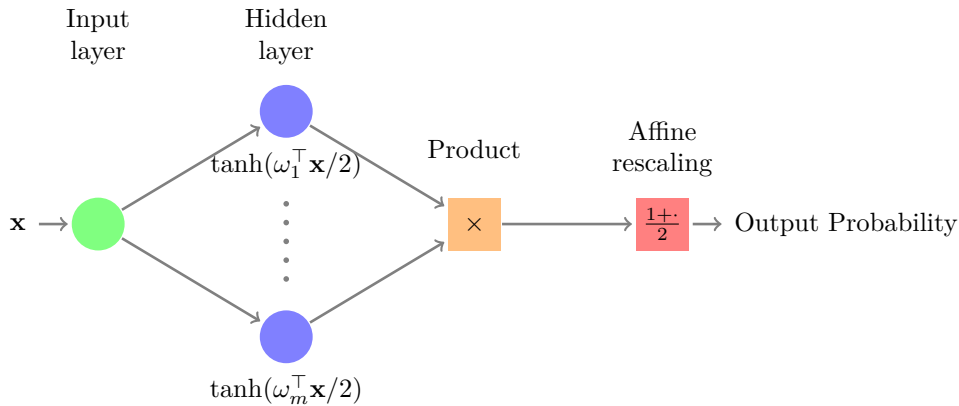
\begin{figure}[h]
\centering
\begin{tikzpicture}[shorten >=1pt,->,draw=black!50, line width=1pt, node distance=2.5cm]
    \tikzstyle{every pin edge}=[<-,shorten <=1pt]
    \tikzstyle{neuron}=[circle,fill=black!25,minimum size=20pt,inner sep=0pt]
    \tikzstyle{input neuron}=[neuron, fill=green!50];
    \tikzstyle{hidden neuron}=[neuron, fill=blue!50];
    \tikzstyle{product neuron}=[rectangle, fill=orange!50, minimum size=20pt,inner sep=0pt];
    \tikzstyle{output neuron}=[rectangle, fill=red!50, minimum size=20pt,inner sep=0pt];
    \tikzstyle{annot} = [text width=4em, text centered]

    \node[input neuron, pin=left:$\mathbf{x}$] (I-1) at (0,0) {};

    \node[hidden neuron, label=below:{$\tanh(\omega_1^\top \mathbf{x}/2)$}] (H-1) at (2.5cm,1.5 cm) {};
    \node[hidden neuron, label=below:{$\tanh(\omega_m^\top \mathbf{x}/2)$}] (H-3) at (2.5cm,-1.5 cm) {};


    \foreach \y in {-0.75,-0.5,-0.25,0,0.25}
      \node[scale=2, text=black!50] at (2.5cm,\y cm) {$\cdot$};

    \node[product neuron] (P-1) at (5cm,0cm) {$\times$};

    \node[output neuron,pin={[pin edge={->}]right:Output Probability}, right of=P-1] (O-1) {$\frac{1+\cdot}{2}$};

    \foreach \dest in {1,3}
        \path (I-1) edge (H-\dest);

    \foreach \source in {1,3}
        \path (H-\source) edge (P-1);

    \path (P-1) edge (O-1);

    \node[annot,above of=H-1, node distance=1cm] (hl) {Hidden layer};
    \node[annot,left of=hl] {Input layer};
    \node[annot,above of=P-1, node distance=1cm] {Product};
    \node[annot,above of=O-1, node distance=1cm] {Affine rescaling};
\end{tikzpicture}
\caption{Diagram of the checkered regression neural network architecture for binary classification.
It can be seen as a 1-hidden layer neural network with $tanh$ activations that are multiplied together.
Lastly, the product is rescaled to the interval $(0,1)$. In particular, the case $m=1$ simply outputs a sigmoid.}
\label{fig:diagram}
\end{figure}

In particular, the ``anti-symmetry'' Property \ref{property:antisym} implies that the decision boundary of our model showcases a checkered structure as explained in the next proposition.

\begin{proposition}[Hamming distance parity]
  \label{prop:hamming}
  Consider a checkered regression (CR) model with parameters $\omega=(\omega_1,\dots,\omega_m)$.
  Let $x$ and $x'$ be two points in $\mathbb{R}^d$ both outside the $m$ hyperplanes of the CR model, i.e. such that $\omega_k^\intercal x\neq 0$ and $\omega_k^\intercal x'\neq 0$ for all $1\le k\le m$.
  Then, the two following conditions are equivalent.
  \begin{enumerate}[(i)]
  \item The CR model predicts that $x$ and $x'$ share the same label:
  \begin{equation*}
  \text{sign}\left(p_\omega\left( 0 | x \right) - \frac{1}{2} \right)
  = \text{sign}\left(p_\omega\left( 0 | x' \right) - \frac{1}{2} \right) \ .
\end{equation*}
  \item The Hamming distance $\sum_{k=1}^m \mathbb{I}\left\{ \text{sign}\left( \omega_k^\intercal x \right) \neq \text{sign}\left( \omega_k^\intercal x' \right) \right\}$ is even.
\end{enumerate}
\end{proposition}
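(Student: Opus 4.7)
The plan is to reduce both conditions to a statement about the product of signs $\prod_{k=1}^m \mathrm{sign}(\omega_k^\top x)$ and then compare this product for $x$ versus $x'$.

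First I would rewrite the checkoid explicitly around its critical value $1/2$. From Definition 5.1 we have
\begin{equation*}
    \Xi_m(\omega_1^\top x, \ldots, \omega_m^\top x) - \tfrac{1}{2}
    = \tfrac{1}{2}\prod_{k=1}^m \tanh\!\left( \tfrac{\omega_k^\top x}{2} \right).
\end{equation*}
Since $\tanh$ is odd and strictly increasing, $\mathrm{sign}(\tanh(t/2)) = \mathrm{sign}(t)$ for every $t \neq 0$. Using the assumption that $\omega_k^\top x \neq 0$ for all $k$, taking the sign of both sides yields
\begin{equation*}
    \mathrm{sign}\!\left( p_\omega(0\,|\,x) - \tfrac{1}{2}\right) = \prod_{k=1}^m \mathrm{sign}(\omega_k^\top x),
\end{equation*}
and analogously for $x'$. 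So condition (i) is equivalent to $\prod_k \mathrm{sign}(\omega_k^\top x) = \prod_k \mathrm{sign}(\omega_k^\top x')$.

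Next I would multiply both sides of that equality by $\prod_k \mathrm{sign}(\omega_k^\top x')$ (which is $\pm 1$, hence invertible) to obtain the equivalent condition
\begin{equation*}
    \prod_{k=1}^m \mathrm{sign}(\omega_k^\top x)\cdot \mathrm{sign}(\omega_k^\top x') = 1.
\end{equation*}
Each factor equals $+1$ when the two signs agree and $-1$ when they disagree. Writing $N$ for the number of indices $k$ where $\mathrm{sign}(\omega_k^\top x) \neq \mathrm{sign}(\omega_k^\top x')$, i.e. the Hamming distance in condition (ii), the product equals $(-1)^N$. Therefore the product is $+1$ iff $N$ is even, which is exactly condition (ii).

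There is no serious obstacle: the only mild care needed is the nonvanishing assumption $\omega_k^\top x, \omega_k^\top x' \neq 0$, which is precisely what ensures each sign is well-defined and each factor in the product is nonzero, making the equivalence go through cleanly in both directions.
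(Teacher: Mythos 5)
Your proof is correct and rests on the same mechanism as the paper's: the paper's one-line proof invokes successive applications of the anti-symmetry property $\Xi_m(z') = 1 - \Xi_m(z)$ (itself a consequence of the oddness of $\tanh$), which is exactly the fact you make explicit by writing $\Xi_m(z) - \tfrac12 = \tfrac12 \prod_k \tanh(z_k/2)$ and reading off the sign as $\prod_k \mathrm{sign}(z_k)$. Your version simply spells out the parity bookkeeping that the paper leaves implicit.
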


\begin{proof}
  By successive applications of Property \ref{property:antisym}.
\end{proof}

Proposition \ref{prop:hamming} shows that the decision regions of a CR model form a hyperplane tessellation of the feature space $\mathcal{X}$, where the tiles are binary labeled in a checkered fashion.

\begin{remark}[DC loss]
We point out that the log-loss of our model is not convex. In fact, it is equal to the difference of two convex functions (a.k.a. ``DC'').
In particular for $m=2$,
\begin{equation}
\label{eq:DC}
-\log( \Xi_2(z_1,z_2) ) = \underbrace{[ -\log(\sigma(z_1)\sigma(z_2)) ]}_{\text{convex}} - \underbrace{[- \log(\sigma(z_1+z_2)) ]}_{\text{convex}} .
\end{equation}
For general $m\ge 2$ (and also in the multiclass case defined later), the log-loss of the checkered regression model is equal to the difference of two convex ``LogSumExp'' functions, and thus is also DC.
\end{remark}

\begin{remark}[XGD vs constrained GD]
We highlight that XGD is intimately related to a constrained GD procedure.
For simplicity, let us focus on the minimization of the function $\ell(z_1,z_2) = -\log( \Xi_2(z_1,z_2) ) = -\log(\sigma(z_1)\sigma(z_2)) + \log(\sigma(z_1+z_2))$.
For any real constant $\kappa$, the function $\ell$ is convex over the region ``$z_1+z_2 =\kappa$''. Hence, it can be minimized by gradient descent with respect to a single variable, say $z_1$, through the substitution $z_2=\kappa-z_1$.
By selecting $\mu=(\mu_1,1-\mu_1)$ with $\mu_1=\sigma(\kappa)$, and \emph{starting from any initial point} (even outside the ``$\kappa$-region''), XGD allows to minimize $\ell$ over that same region. Although the benefit compared to constrained GD seems limited in this simple example (since the substitution $z_2=\kappa-z_1$ is trivial), XGD becomes much more convenient in more complex scenarios involving large and high dimensional datasets. Indeed, XGD circumvents the double need of finding an initial point belonging to $\mathcal{E}(\mu)$, and of performing intricate projections onto this same set, which can be very challenging in general (especially for checkered regression models with $m>2$).
\end{remark}

\subsection{Sum-log-concavity via circular convolution}
\label{subsec:Sum-log-concavity}

The purpose of this subsection is to demonstrate that the checkered regression is a sum-log-concave model.

\begin{proposition}[Circular convolution]
  \label{prop:circonv}
  For all $z=(z_1,\dots,z_m)\in\mathbb{R}^m$,
  \begin{equation*}
    \begin{pmatrix} \Xi_m(z) \\ 1-\Xi_m(z) \end{pmatrix}
      = \begin{pmatrix} \sigma(z_1)\\ 1-\sigma(z_1) \end{pmatrix} \circledast \dots \circledast \begin{pmatrix} \sigma(z_m)\\ 1-\sigma(z_m) \end{pmatrix} \ ,
  \end{equation*}
  where $\circledast$ denotes the circular convolution operator.
\end{proposition}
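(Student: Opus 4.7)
My plan is to prove this identity by induction on $m$, with a slicker alternative via the length-2 discrete Fourier transform (DFT) on $\mathbb{Z}/2\mathbb{Z}$ that I would mention as a remark. The key algebraic fact powering either approach is the elementary identity $\sigma(z) = \tfrac{1}{2}(1 + \tanh(z/2))$, which in particular yields $\sigma(z) - (1-\sigma(z)) = \tanh(z/2)$.

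For the inductive proof, the base case $m=1$ is immediate: $\Xi_1(z_1) = \sigma(z_1)$ by the definition of $\Xi_1$, and the circular convolution of a single length-2 vector is that vector itself. For the inductive step I would set $T_m = \prod_{k=1}^m \tanh(z_k/2)$ so that the inductive hypothesis reads $(\Xi_m, 1-\Xi_m) = \bigl(\tfrac{1+T_m}{2}, \tfrac{1-T_m}{2}\bigr)$. By associativity of $\circledast$, it then suffices to compute the length-2 circular convolution of $\bigl(\tfrac{1+T_m}{2}, \tfrac{1-T_m}{2}\bigr)$ with $(\sigma(z_{m+1}), 1-\sigma(z_{m+1})) = \bigl(\tfrac{1+t_{m+1}}{2}, \tfrac{1-t_{m+1}}{2}\bigr)$ where $t_{m+1} = \tanh(z_{m+1}/2)$. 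Plugging into the explicit length-2 formula $(a_0 b_0 + a_1 b_1,\ a_0 b_1 + a_1 b_0)$, the cross terms in $T_m$ and in $t_{m+1}$ cancel pairwise and one is left with $\bigl(\tfrac{1+T_m t_{m+1}}{2}, \tfrac{1-T_m t_{m+1}}{2}\bigr)$, which coincides with $(\Xi_{m+1}, 1-\Xi_{m+1})$ and closes the induction.

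The Fourier-theoretic shortcut, which I would include as an explanatory remark, goes as follows: the involution $\Phi(a,b) = (a+b, a-b)$ is the length-2 DFT on $\mathbb{Z}/2\mathbb{Z}$, and by the convolution theorem it transforms $\circledast$ into componentwise multiplication. Since $\Phi(\sigma(z_k), 1-\sigma(z_k)) = (1, \tanh(z_k/2))$, the Fourier image of the full convolution is $(1, \prod_k \tanh(z_k/2)) = (1, T_m)$; applying $\Phi^{-1}(a,b) = \tfrac{1}{2}(a+b, a-b)$ recovers exactly $(\Xi_m, 1-\Xi_m)$.

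I do not foresee a real obstacle here: this is a purely algebraic identity, and both approaches reduce to the two observations above. The only mild pitfall in the inductive write-up is bookkeeping which cross-term lands in which entry of the length-2 convolution, which is precisely what the DFT formulation trivialises. I would therefore favour the concrete inductive proof for the main text while noting the DFT viewpoint to clarify \emph{why} the identity holds structurally.
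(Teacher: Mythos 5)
Your proposal is correct. Your primary route --- induction on $m$ using the explicit length-2 convolution formula and the identity $\sigma(z)=\tfrac12(1+\tanh(z/2))$ --- is a genuinely different, more elementary argument than the paper's, and the computation in your inductive step checks out: with $a=\bigl(\tfrac{1+T_m}{2},\tfrac{1-T_m}{2}\bigr)$ and $b=\bigl(\tfrac{1+t_{m+1}}{2},\tfrac{1-t_{m+1}}{2}\bigr)$ one indeed gets $a_0b_0+a_1b_1=\tfrac{1+T_mt_{m+1}}{2}$ and $a_0b_1+a_1b_0=\tfrac{1-T_mt_{m+1}}{2}$, and the base case and the appeal to associativity of $\circledast$ are both unproblematic. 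The ``Fourier-theoretic shortcut'' you relegate to a remark is in fact exactly the paper's proof: the paper computes $\mathrm{DFT}(\sigma(z_k),1-\sigma(z_k))=(1,\tanh(z_k/2))$, invokes the circular convolution theorem to turn the convolution into the componentwise product $(1,\prod_k\tanh(z_k/2))$, and applies the inverse DFT to land on $(\Xi_m,1-\Xi_m)$. The trade-off is as you describe it: the induction is self-contained and requires no convolution theorem, at the cost of a little bookkeeping, while the DFT argument handles all $m$ at once and explains structurally why the $\tanh$ product appears --- which is also why the paper's multiclass generalization (Definition \ref{def:MCR}) is phrased directly in terms of circular convolutions of SoftArgMax vectors, where the Fourier viewpoint scales to modulus $c$ whereas a two-term cancellation argument would not.
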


\begin{proof}
  Let $m\ge 1$ and $z=(z_1,\dots,z_m)\in\mathbb{R}^m$.
  For all $1 \le k \le m$, let us denote the following binary probability laws:
  \begin{equation}
    \label{eq:bk}
      b_k = \begin{pmatrix} \sigma(z_k)\\ 1-\sigma(z_k) \end{pmatrix} \ ,
  \end{equation}
  with $\sigma$ the sigmoid function.
  Then, the \emph{Discrete Fourier Transform} (DFT in short) 
  $B_k$ of each $b_k$ is given by:
  \begin{equation}
    \label{eq:Bk}
      B_k = \text{DFT}(b_k) = \begin{pmatrix} \sigma(z_k) + 1-\sigma(z_k) \\ \sigma(z_k) - (1-\sigma(z_k)) \end{pmatrix}
      = \begin{pmatrix} 1 \\ 2\sigma(z_k) - 1 \end{pmatrix}
      = \begin{pmatrix} 1 \\ \tanh(\frac{z_k}{2}) \end{pmatrix} \ .
  \end{equation}

  By the circular convolution theorem, we know that the circular convolution can be computed through the product of the discrete Fourier transforms.
  Thus, we consider the componentwise product of the $B_k$'s:
  \begin{equation}
    \label{eq:prodBk}
      \prod_{k=1}^m B_k = \begin{pmatrix} 1 \\ \prod_{k=1}^m \tanh(\frac{z_k}{2}) \end{pmatrix} \ .
  \end{equation}

  And finally, we take the \emph{Inverse Discrete Fourier Transform} (IDFT):
  \begin{equation}
    \label{eq:IDFT}
      \text{IDFT}(\prod_{k=1}^m B_k ) =
      \frac{1}{2}\begin{pmatrix} 1+\prod_{k=1}^m \tanh(\frac{z_k}{2}) \\ 1-\prod_{k=1}^m \tanh(\frac{z_k}{2}) \end{pmatrix}
      = \begin{pmatrix} \Xi_m(z) \\ 1-\Xi_m(z) \end{pmatrix} \ ,
  \end{equation}
  which is the binary law produced by the checkoid function $\Xi_m$.

\end{proof}

In the next section, we generalize our checkered regression model to the multiclass setting.
More precisely, we extend Proposition \ref{prop:circonv} by replacing sigmoids with SoftArgMax vectors.


\paragraph{Probabilistic interpretation}

The next result describes a probabilistic interpretation of the checkoid function in terms of the parity of the number of successes in Bernoulli trials.
Although this is a corollary of Proposition \ref{prop:circonv}, we provide an additional more intuitive proof.

Let $X_1,\dots,X_m$ be independent Bernoulli random variables valued in $\{-1,1\}$ with success probabilities:
\begin{equation}
  \label{eq:coins}
  \mathbb{P}(X_k=1) = \sigma(z_k).
\end{equation}
For any $1 \le k \le m$, we refer to the event $(X_k=-1)$ as a ``failure''.

\begin{proposition}
  \label{prop:even}
Let us flip the $m$ independent Bernoulli coins defined in Eq. \ref{eq:coins}. Then,
\begin{equation*}
  \mathbb{P}(\text{total number of failures is even})
  = \Xi_m(z_1,\dots,z_m) \ . 
\end{equation*}
\end{proposition}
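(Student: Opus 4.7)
The plan is to translate the event "total number of failures is even" into a multiplicative statement about the $X_k$'s, then exploit independence. The key observation is that since each $X_k \in \{-1,+1\}$, the product $\prod_{k=1}^m X_k$ equals $(-1)^N$ where $N = \sum_{k=1}^m \mathbb{I}\{X_k=-1\}$ is the total number of failures. Hence $N$ is even if and only if $\prod_{k=1}^m X_k = 1$, which gives the indicator identity
\begin{equation*}
    \mathbb{I}\{N \text{ is even}\} = \frac{1 + \prod_{k=1}^m X_k}{2} \ .
\end{equation*}

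Taking expectations on both sides and using the independence of $X_1,\dots,X_m$, I would then factor the expectation of the product:
\begin{equation*}
    \mathbb{P}(N \text{ is even}) = \frac{1}{2}\left( 1 + \prod_{k=1}^m \mathbb{E}[X_k] \right) \ .
\end{equation*}

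Finally, I compute $\mathbb{E}[X_k] = \sigma(z_k) - (1-\sigma(z_k)) = 2\sigma(z_k) - 1 = \tanh(z_k/2)$, which immediately yields $\mathbb{P}(N \text{ is even}) = \frac{1}{2}(1+\prod_{k=1}^m \tanh(z_k/2)) = \Xi_m(z_1,\dots,z_m)$ by Definition \ref{def:checkoid}. I do not anticipate a significant obstacle here: the only subtlety is recognizing the parity-product trick, after which the argument reduces to a one-line independence computation. This also gives a clean intuition for why the circular convolution formula of Proposition \ref{prop:circonv} must be true, since a Bernoulli law on $\{-1,+1\}$ has DFT $(1, \tanh(z_k/2))$ and convolution corresponds to summing independent discrete random variables.
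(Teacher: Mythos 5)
Your proof is correct and follows essentially the same route as the paper: both identify the event with $\prod_{k=1}^m X_k = 1$, rewrite the probability as $\frac{1}{2}(1+\mathbb{E}[\prod_k X_k])$ using that the product is valued in $\{-1,1\}$, factor by independence, and compute $\mathbb{E}[X_k] = 2\sigma(z_k)-1 = \tanh(z_k/2)$. Your closing remark linking this to the DFT computation in Proposition \ref{prop:circonv} is a nice observation but not part of the paper's argument here.
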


\begin{proof}

\begin{multline}
\mathbb{P}(\text{total number of failures is even})
= \mathbb{P}(X_1\times\dots\times X_m = 1) \\
= \frac{1}{2} \left( 1 + \mathbb{E}[X_1\times\dots\times X_m] \right) \quad (\text{since } X_1\times\dots\times X_m \text{ is valued in } \{-1,1\}) \\
= \frac{1}{2} \left( 1 + \mathbb{E}[X_1]\times\dots\times \mathbb{E}[X_m] \right) \quad (\text{by independence}) \\
= \frac{1}{2} \left( 1 + (2\sigma(z_1)-1)\times\dots\times (2\sigma(z_m)-1) \right)
= \frac{1}{2} \left( 1 + \prod_{k=1}^m \tanh(\frac{z_k}{2}) \right) \, .
\end{multline}

\end{proof}

Proposition \ref{prop:even} shows that the quantity $\Xi_m(z)$ is a very natural one that results from combining several sigmoids.

\paragraph{GMM class posterior probability}

Furthermore, the checkoid function can be derived as the class posterior probability of a Gaussian mixture model (GMM).

\begin{proposition}[Checkered GMM posterior]
\label{prop:posterior}
    Let $d,m\ge 1$, $\mu_0,\mu_1,\dots,\mu_m \in \mathbb{R}^d$ (such that $\mu_1,\dots,\mu_m$ are pairwise orthogonal) and denote the matrix $\mu=(\mu_1,\dots,\mu_m) \in \mathbb{R}^{d\times m}$. Consider a random pair $(X,Y)$ valued in $\mathbb{R}^d \times \{0,1\}$ such that $\mathbb{P}(Y=1)=\frac12$ and, given $Y$,
    \begin{equation*}
        X \sim \frac{1}{2^{m-1}} \sum_{\upsilon\in\{0,1\}^m : |\upsilon| \equiv Y [2] }  \mathcal{N}( \mu_0 + \mu \upsilon, I_d ) \ .
    \end{equation*}
    Then for all $x\in \mathbb{R}^d$,
    \begin{equation*}
        \mathbb{P}(Y=+1 | X=x) = \Xi_m( w_1^\top x + b_1, \dots, w_m^\top x + b_m) \ ,
    \end{equation*}
    where $w_1,\dots,w_m, b_1,\dots,b_m$ are constants depending on the Gaussian parameters $\mu_0, \mu_1,\dots,\mu_m$.
\end{proposition}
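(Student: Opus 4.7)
The plan is to apply Bayes' rule, then exploit the pairwise orthogonality of $\mu_1, \dots, \mu_m$ to factor the Gaussian densities into a product over coordinates, and finally apply a standard even/odd subset identity to reduce the posterior to the form $\frac{1}{2}(1 + \prod_k \tanh(a_k/2))$.

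First I would write the posterior as a ratio. Since $\mathbb{P}(Y=1) = \mathbb{P}(Y=0) = \tfrac{1}{2}$ and the marginal $p_X(x)$ equals $\tfrac{1}{2^m} \sum_{\upsilon \in \{0,1\}^m} \phi(x; \mu_0 + \mu \upsilon, I_d)$ (where $\phi$ denotes a Gaussian density), Bayes' rule gives
\begin{equation*}
\mathbb{P}(Y=1 \mid X=x) = \frac{\sum_{\upsilon : |\upsilon|\equiv 1 [2]} \phi(x; \mu_0 + \mu\upsilon, I_d)}{\sum_{\upsilon \in \{0,1\}^m} \phi(x; \mu_0 + \mu\upsilon, I_d)}.
\end{equation*}
The key step is then to expand $\|x - \mu_0 - \mu\upsilon\|^2$. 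Using $\mu_k^\top \mu_l = 0$ for $k \neq l$ together with $\upsilon_k^2 = \upsilon_k$, the cross terms vanish and
\begin{equation*}
\|x - \mu_0 - \mu\upsilon\|^2 = \|x - \mu_0\|^2 - 2\sum_{k=1}^m \upsilon_k \mu_k^\top (x - \mu_0) + \sum_{k=1}^m \upsilon_k \|\mu_k\|^2.
\end{equation*}
Consequently $\phi(x; \mu_0 + \mu\upsilon, I_d) = C(x) \prod_{k=1}^m e^{\upsilon_k a_k}$ where $a_k := \mu_k^\top (x - \mu_0) - \tfrac{\|\mu_k\|^2}{2}$ and $C(x)$ is independent of $\upsilon$. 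This decoupling is the real content of the orthogonality assumption.

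Next, I would apply the elementary identities $\sum_{\upsilon \in \{0,1\}^m} \prod_k e^{\upsilon_k a_k} = \prod_k (1+e^{a_k})$ and $\sum_{\upsilon} (-1)^{|\upsilon|} \prod_k e^{\upsilon_k a_k} = \prod_k (1-e^{a_k})$, from which
\begin{equation*}
\sum_{|\upsilon| \text{ odd}} \prod_k e^{\upsilon_k a_k} = \tfrac{1}{2}\Bigl[\prod_k (1+e^{a_k}) - \prod_k (1-e^{a_k})\Bigr].
\end{equation*}
The $C(x)$ factor cancels in the posterior ratio, yielding
\begin{equation*}
\mathbb{P}(Y=1 \mid X=x) = \tfrac{1}{2}\Bigl[1 - \prod_{k=1}^m \tfrac{1 - e^{a_k}}{1 + e^{a_k}}\Bigr] = \tfrac{1}{2}\Bigl[1 + \prod_{k=1}^m \tanh(a_k/2)\Bigr] = \Xi_m(a_1, \dots, a_m),
\end{equation*}
using $(e^a-1)/(e^a+1) = \tanh(a/2)$. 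Setting $w_k = \mu_k$ and $b_k = -\mu_k^\top \mu_0 - \tfrac{\|\mu_k\|^2}{2}$ gives the desired affine arguments.

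The only subtlety I anticipate is bookkeeping: confirming that orthogonality really kills every cross term $\upsilon_k \upsilon_l \mu_k^\top \mu_l$ (which is essential so that the sum over $\upsilon$ factorizes as $\prod_k (1 + e^{a_k})$), and verifying the parity convention (odd $|\upsilon|$ for $Y=1$) matches the sign in the final $\tanh$ product. Both are routine but need care; any sign discrepancy can be absorbed by redefining $w_k$ up to a flip, which leaves $\Xi_m$ unchanged up to an application of the anti-symmetry Property~\ref{property:antisym}.
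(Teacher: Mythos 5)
Your proof is correct and takes essentially the same route as the paper's, whose one-line argument simply invokes the identity $\Xi_m(z)=\sum_{\upsilon:|\upsilon|\equiv 0\,[2]}e^{-\upsilon^\top z}\big/\prod_{k}(1+e^{-z_k})$; your Bayes computation, the orthogonality-induced factorization of the Gaussian densities, and the even/odd subset sums are precisely the steps the paper leaves implicit. The parity-of-$m$ sign ambiguity you flag is real but, as you correctly note, is absorbed into the constants $w_k,b_k$ via the anti-symmetry Property~\ref{property:antisym}, so the stated conclusion is unaffected.
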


\begin{proof}
    The proof easily follows from:
    \begin{equation*}
        \Xi_m(z) = \frac{\sum_{\upsilon \in \{0,1\}^m : |\upsilon| \equiv 0 [2] } e^{-\upsilon^\top z} }{(1+e^{-z_1})\times \dots \times (1+e^{-z_m})}
        \quad \text{and} \quad
        1-\Xi_m(z) = \frac{\sum_{\upsilon \in \{0,1\}^m : |\upsilon| \equiv 1 [2] } e^{-\upsilon^\top z} }{(1+e^{-z_1})\times \dots \times (1+e^{-z_m})} \ .
    \end{equation*} 
\end{proof}

Of course, a more general version of Proposition \ref{prop:posterior} can be proved in the same way for mixtures of distributions belonging to any exponential family. In the multiclass case, one shall replace the set $\{0,1\}^m$ by $\{0,\dots,c-1\}^m$ and the ``modulo 2'' by ``modulo $c$''.
Note that the case $m=1$ corresponds to the well-understood linearly separable case with a single Gaussian per class.
The more complex case $m=2$ with two Gaussian distributions per class is often called ``XOR GMM'' (for instance in \cite{ben2022high}): see Figure \ref{fig:cloud} for an illustration.

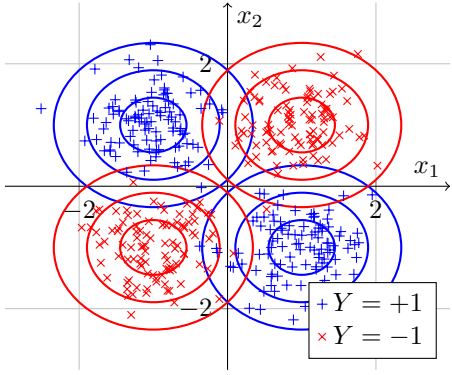
\begin{figure}[htbp]
    \centering
    \begin{minipage}{.5\textwidth}
    \centering
    \begin{tikzpicture}
        \begin{axis}[
            xmin=-3, xmax=3,
            ymin=-3, ymax=3,
            grid=both,
            grid style={line width=.1pt, draw=gray!30},
            major grid style={line width=.2pt,draw=gray!50},
            axis lines=center,
            axis line style={->},
            xlabel={$x_1$},
            ylabel={$x_2$},
            legend pos=south east,
            ]

            \addplot[only marks, mark=+, color=blue] table {figures/points1_standard.dat};
            \addlegendentry{$Y=+1$}
            \draw[blue, thick, rotate around={0:(axis cs:1,-1)}] (axis cs:1,-1) ellipse [x radius=0.4472135954999579, y radius=0.4472135954999579];
            \draw[blue, thick, rotate around={0:(axis cs:-1,1)}] (axis cs:-1,1) ellipse [x radius=0.4472135954999579, y radius=0.4472135954999579];
            \draw[blue, thick, rotate around={0:(axis cs:1,-1)}] (axis cs:1,-1) ellipse [x radius=0.8944271909999159, y radius=0.8944271909999159];
            \draw[blue, thick, rotate around={0:(axis cs:-1,1)}] (axis cs:-1,1) ellipse [x radius=0.8944271909999159, y radius=0.8944271909999159];
            \draw[blue, thick, rotate around={0:(axis cs:1,-1)}] (axis cs:1,-1) ellipse [x radius=1.3416407864998738, y radius=1.3416407864998738];
            \draw[blue, thick, rotate around={0:(axis cs:-1,1)}] (axis cs:-1,1) ellipse [x radius=1.3416407864998738, y radius=1.3416407864998738];

            \addplot[only marks, mark=x, color=red] table {figures/points2_standard.dat};
            \addlegendentry{$Y=-1$}
            \draw[red, thick, rotate around={0:(axis cs:1,1)}] (axis cs:1,1) ellipse [x radius=0.4472135954999579, y radius=0.4472135954999579];
            \draw[red, thick, rotate around={0:(axis cs:-1,-1)}] (axis cs:-1,-1) ellipse [x radius=0.4472135954999579, y radius=0.4472135954999579];
            \draw[red, thick, rotate around={0:(axis cs:1,1)}] (axis cs:1,1) ellipse [x radius=0.8944271909999159, y radius=0.8944271909999159];
            \draw[red, thick, rotate around={0:(axis cs:-1,-1)}] (axis cs:-1,-1) ellipse [x radius=0.8944271909999159, y radius=0.8944271909999159];
            \draw[red, thick, rotate around={0:(axis cs:1,1)}] (axis cs:1,1) ellipse [x radius=1.3416407864998738, y radius=1.3416407864998738];
            \draw[red, thick, rotate around={0:(axis cs:-1,-1)}] (axis cs:-1,-1) ellipse [x radius=1.3416407864998738, y radius=1.3416407864998738];
        \end{axis}
    \end{tikzpicture}
    \end{minipage}%
    \begin{minipage}{.5\textwidth}
    \centering
    \caption{Cloud of points $X=(x_1,x_2)$ from two classes, each generated from a mixture of Gaussian distributions with identical covariance matrix equal to 0.2 times the 2-dimensional identity matrix. The positive class (blue $+$) is a balanced mixture of two Gaussians
    with respective means at $(1, -1)$ and $(-1, 1)$. The negative class (red $\times$) is also a balanced mixture of two Gaussians with respective means at $(1,1)$ and $(-1,-1)$.}
    \label{fig:cloud}
    \end{minipage}
\end{figure}

\subsection{Multiclass checkered regression}
\label{subsec:multi}

We now define a muticlass version of our model that can be used in classification problems having more than two classes.
Let $\mathcal{Y}=\{0,\dots,c-1\}$ be the set of classes with $c\ge 2$.

\begin{definition}[Multiclass checkered regression]
  \label{def:MCR}
  The multiclass checkered regression model with parameters $\Omega=(\Omega_1,\dots,\Omega_m) \in \mathbb{R}^{m \times c \times d}$ (with each $\Omega_k \in \mathbb{R}^{c \times d}$)
  is given, for any input vector $x \in \mathbb{R}^d$, by the posterior probabilities
\begin{equation*}
    \forall y\in\mathcal{Y}\, , \quad p_\Omega\left( y | x \right) = \Xi_{m,y}(\Omega_1 x,\dots,\Omega_m x) \ ,
\end{equation*}
where the multiclass checkoid function $\Xi_{m,y}$ is defined for all $Z=(Z_1^\top,\dots,Z_m^\top)^\top \in \mathbb{R}^{m \times c }$ as

\begin{equation*}
  \vec \Xi_{m}(Z) =
  \begin{pmatrix} \Xi_{m,0}(Z) \\ \vdots \\ \Xi_{m,c-1}(Z) \end{pmatrix}
  = \vec \sigma(Z_1) \circledast \dots \circledast \vec \sigma(Z_m) \ ,
\end{equation*}
where $\circledast$ denotes the circular convolution operator and we recall that the SoftArgMax function is defined at equation \ref{eq:softargmax}.

\end{definition}

The multiclass checkered regression model is a simple way of obtaining a sum-log-concave probability distribution
by taking the circular convolution of several SoftArgMax vectors. As in the binary case discussed before, it allows to represent
richer structures than a standard multiclass logistic regression that only uses a single SoftArgMax distribution (particular case $m=1$).

\paragraph{(Cross) Gradient Formulas}

We here provide the expressions for the (cross) gradients of the log-loss of the checkered regression model.

\begin{proposition}[Gradient]
  \label{prop:grad}
  \begin{enumerate}[(i)]
      \item Consider the logarithmic loss $\ell(z) = -\log(\Xi_m(z))$ for all $z=(z_1,\dots,z_m)\in \mathbb{R}^m$.
  Then for any $1\le k \le m$, the $k$-th partial derivative of $\ell$ is
  \begin{equation*}
    \frac{\partial \ell}{\partial z_k}(z) = \sigma(z_k) \cdot \left( 1 - \frac{\Xi_{m-1}(z_{-k})}{\Xi_m(z)} \right) \ ,
  \end{equation*}
  where $z_{-k}=(z_j)_{j\neq k}$ .
  \item For multiclass,
  \begin{equation*}
    - \nabla_{Z_k} \log \Xi_{m,y}(Z) = \left[ \sigma_l(Z_k)\left( 1 - \frac{ \Xi_{m-1, y-l}(Z_{-k})}{\Xi_{m,y}(Z)} \right) \right]_{0\le l\le c-1} \ ,
  \end{equation*}
where $Z_{-k}=(Z_j)_{j\neq k}$
and $y-l$ must be understood ``modulo $c$''.
  \end{enumerate}
\end{proposition}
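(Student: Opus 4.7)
The plan is to exploit the factorization of the (multiclass) checkoid with respect to a single index $k$, which is a direct consequence of the circular convolution representation from Proposition~\ref{prop:circonv} and Definition~\ref{def:MCR}. Once this factorization is in hand, both gradient formulas reduce to a one-line application of the chain rule together with the logarithmic-derivative identity $-\partial \log f/\partial x = -(1/f)\,\partial f/\partial x$.

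For part (i), I would first isolate the $k$-th factor of the circular convolution. Using $\tanh(z_k/2) = 2\sigma(z_k) - 1$ together with $\prod_{j\neq k}\tanh(z_j/2) = 2\Xi_{m-1}(z_{-k}) - 1$, this yields the binary factorization
$$\Xi_m(z) = \sigma(z_k)\,\Xi_{m-1}(z_{-k}) + (1-\sigma(z_k))\bigl(1-\Xi_{m-1}(z_{-k})\bigr).$$
Differentiating with respect to $z_k$ and using $\sigma'(z_k) = \sigma(z_k)(1-\sigma(z_k))$ gives $\partial_{z_k}\Xi_m(z) = \sigma(z_k)(1-\sigma(z_k))\bigl(2\Xi_{m-1}(z_{-k}) - 1\bigr)$. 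Dividing by $-\Xi_m(z)$ and using the factorization above to rewrite $(1-\sigma(z_k))\bigl(2\Xi_{m-1}(z_{-k}) - 1\bigr) = \Xi_{m-1}(z_{-k}) - \Xi_m(z)$ produces exactly $\sigma(z_k)\bigl(1 - \Xi_{m-1}(z_{-k})/\Xi_m(z)\bigr)$, as announced.

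Part (ii) proceeds in the same spirit, with SoftArgMax playing the role of the sigmoid. Isolating the $k$-th factor in the multiclass circular convolution of Definition~\ref{def:MCR} gives
$$\Xi_{m,y}(Z) = \sum_{l=0}^{c-1} \sigma_l(Z_k)\,\Xi_{m-1,\, y-l}(Z_{-k}),$$
with $y - l$ taken modulo $c$, which is linear in the entries of $\vec\sigma(Z_k)$. Applying the standard Jacobian $\partial \sigma_l(Z_k)/\partial (Z_k)_p = \sigma_l(Z_k)\bigl(\mathbb{I}\{l=p\} - \sigma_p(Z_k)\bigr)$ and splitting the resulting sum into diagonal and off-diagonal contributions yields
$$\partial_{(Z_k)_p}\Xi_{m,y}(Z) = \sigma_p(Z_k)\,\Xi_{m-1,\, y-p}(Z_{-k}) - \sigma_p(Z_k)\,\Xi_{m,y}(Z),$$
where the off-diagonal part collapses via the identity $\sum_{l} \sigma_l(Z_k)\,\Xi_{m-1,\, y-l}(Z_{-k}) = \Xi_{m,y}(Z)$ (i.e.\ the factorization just above). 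Dividing by $-\Xi_{m,y}(Z)$ immediately produces the claimed vector formula.

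I do not anticipate a substantive obstacle here: once the factorization with respect to index $k$ is written down, the computation is entirely mechanical. The only mild care point is correctly tracking the modular index shift $y - l \pmod c$ when pulling the $k$-th factor out of the circular convolution; everything else is standard chain-rule bookkeeping and an application of the SoftArgMax Jacobian.
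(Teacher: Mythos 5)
Your proof is correct, and it rests on exactly the key identity the paper relies on: the factorization $\Xi_{m,y}(Z)=\sum_{l}\sigma_l(Z_k)\,\Xi_{m-1,y-l}(Z_{-k})$ (stated in the paper immediately after the proposition, and implicit in the expansion used in the proof of Proposition~\ref{prop:crossgrad}), combined with the softmax Jacobian; the paper itself omits the computation, so your write-up simply supplies the mechanical chain-rule details along the intended route. All the intermediate algebraic identities you invoke (the binary factorization of $\Xi_m$, the rewriting $(1-\sigma(z_k))(2\Xi_{m-1}(z_{-k})-1)=\Xi_{m-1}(z_{-k})-\Xi_m(z)$, and the collapse of the off-diagonal softmax terms) check out.
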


In particular, these partial derivatives are all bounded in the interval $(-1,1)$ since
\begin{equation*}
    \sum_{0\le l\le c-1} \sigma_l(Z_k)  \Xi_{m-1, y-l}(Z_{-k}) = \Xi_{m,y}(Z) .
\end{equation*}

\begin{proposition}[Cross-Gradient]
  \label{prop:crossgrad}
  For $Z=(Z_1^\top,\dots,Z_m^\top)^\top, Z'=(Z_1'^\top,\dots,Z_m'^\top)^\top \in \mathbb{R}^{m \times c }$,
    \begin{equation*}
    - \nabla^{Z'}_{Z_k} \log \Xi_{m,y}(Z) =
    \sum_{j=0}^{c-1} \frac{\sigma_{j}(Z'_k) \Xi_{m-1,y-j}(Z'_{-k})}{\Xi_{m,y}(Z')}
    \begin{bmatrix}
        \sigma_0(Z_k) \\
        \vdots \\
        \sigma_j(Z_k) - 1 \\
        \vdots \\
        \sigma_{c-1}(Z_k)
    \end{bmatrix} \ ,
  \end{equation*}
  which is equal to the gradient in Proposition \ref{prop:grad}-(ii) for $Z'=Z$.
\end{proposition}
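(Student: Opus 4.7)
The plan is to expand the multiclass checkoid $\Xi_{m,y}$ into its sum-of-log-concave representation and then apply the cross-gradient definition termwise. Writing out the circular convolution explicitly gives
\[
\Xi_{m,y}(Z) \;=\; \sum_{(j_1,\dots,j_m)\,:\,j_1+\dots+j_m \equiv y \,[c]} p_{j_1,\dots,j_m}(Z), \qquad p_{j_1,\dots,j_m}(Z) \;=\; \prod_{l=1}^m \sigma_{j_l}(Z_l),
\]
and each factor $\sigma_{j_l}$ being log-concave makes each $p_{j_1,\dots,j_m}$ log-concave, so Assumption~\ref{ass:sumLogConcave} applies with $S=c^{m-1}$ summands.

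Next, I plug this decomposition into Definition~\ref{def:cross_grad}-(i) applied to $F=-\log \Xi_{m,y}$ and restrict the gradient to the block $Z_k$. The logarithmic derivative factorizes cleanly, namely
\[
\frac{\nabla_{Z_k} p_{j_1,\dots,j_m}(Z)}{p_{j_1,\dots,j_m}(Z)} \;=\; \nabla_{Z_k} \log \sigma_{j_k}(Z_k) \;=\; e_{j_k} - \sigma(Z_k),
\]
where $e_j \in \mathbb{R}^c$ is the standard basis vector and I used the well-known derivative of the log-SoftArgMax. Substituting this in yields
\[
-\nabla^{Z'}_{Z_k}\log \Xi_{m,y}(Z) \;=\; -\sum_{(j_1,\dots,j_m)\,:\,\sum j_l \equiv y\,[c]} \frac{\prod_l \sigma_{j_l}(Z'_l)}{\Xi_{m,y}(Z')}\,\bigl(e_{j_k} - \sigma(Z_k)\bigr).
\]

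Then I regroup the sum by the value of $j_k = j$, letting the remaining indices $(j_l)_{l\neq k}$ range over tuples satisfying $\sum_{l\neq k} j_l \equiv y-j \,[c]$. Exactly by definition of $\Xi_{m-1,y-j}(Z'_{-k})$, the partial sum of $\prod_{l\neq k} \sigma_{j_l}(Z'_l)$ over these tuples equals $\Xi_{m-1,y-j}(Z'_{-k})$, leading to
\[
-\nabla^{Z'}_{Z_k}\log \Xi_{m,y}(Z) \;=\; \sum_{j=0}^{c-1} \frac{\sigma_j(Z'_k)\,\Xi_{m-1,y-j}(Z'_{-k})}{\Xi_{m,y}(Z')}\bigl(\sigma(Z_k) - e_j\bigr),
\]
which is exactly the column vector displayed in the statement (its entries are $\sigma_i(Z_k)$ for $i\neq j$ and $\sigma_j(Z_k)-1$ for $i=j$).

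Finally I would verify the $Z'=Z$ reduction by splitting the right-hand side into the $\sigma(Z_k)$-piece and the $e_j$-piece: the $\sigma(Z_k)$-piece collapses because $\sum_j \sigma_j(Z_k)\,\Xi_{m-1,y-j}(Z_{-k}) = \Xi_{m,y}(Z)$, and subtracting the $e_j$-piece componentwise yields $\sigma_l(Z_k)\bigl(1 - \Xi_{m-1,y-l}(Z_{-k})/\Xi_{m,y}(Z)\bigr)$, matching Proposition~\ref{prop:grad}-(ii). No real obstacle is expected here; the only step requiring care is the bookkeeping of the ``modulo $c$'' indexing when peeling off the $k$-th coordinate, since it is what turns the outer-product expansion of the convolution into the shifted checkoid $\Xi_{m-1,y-j}(Z'_{-k})$.
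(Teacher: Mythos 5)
Your proposal is correct and follows essentially the same route as the paper's proof: both expand $\Xi_{m,y}$ as the sum over tuples $\upsilon\in\{0,\dots,c-1\}^m$ with $|\upsilon|\equiv y\,[c]$ of the log-concave products $\prod_l \sigma_{\upsilon_l}(Z_l)$, use $\nabla_{Z_k}\log\sigma_{j}(Z_k)=e_{j}-\sigma(Z_k)$ inside the cross-gradient definition, and then regroup by the value of the $k$-th index to recognize $\Xi_{m-1,y-j}(Z'_{-k})$. You merely make explicit the intermediate steps (the $S=c^{m-1}$ decomposition and the $Z'=Z$ sanity check) that the paper leaves implicit.
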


\begin{proof}
    We have
    \begin{multline*}
        - \nabla^{Z'}_{Z_k} \log \Xi_{m,y}(Z)
        = \frac{1}{\Xi_{m,y}(Z')}
        \sum_{\upsilon \in \{0,\dots,c-1\}^m : |\upsilon|\equiv y [c]} \prod_{l=1}^m \sigma_{\upsilon_l}(Z_l') \begin{bmatrix}
        \sigma_0(Z_k) \\
        \vdots \\
        \sigma_{\upsilon_k}(Z_k) - 1 \\
        \vdots \\
        \sigma_{c-1}(Z_k)
    \end{bmatrix} \\
    = \frac{1}{\Xi_{m,y}(Z')}
    \sum_{j=0}^{c-1} \sigma_{j}(Z_k')
    \underbrace{\sum_{\tilde \upsilon \in \{0,\dots,c-1\}^{m-1} : |\tilde \upsilon|\equiv y-j [c]} \prod_{l\neq k} \sigma_{\tilde \upsilon_l}(Z_l')}_{ \Xi_{m-1, y-j}(Z_{-k}') }
    \begin{bmatrix}
        \sigma_0(Z_k) \\
        \vdots \\
        \sigma_j(Z_k) - 1 \\
        \vdots \\
        \sigma_{c-1}(Z_k)
    \end{bmatrix}.
    \end{multline*}
\end{proof}

\section{Conclusion}
\label{sec:conclusion}

In this paper, we proposed an extension of the standard convex optimization framework for the class of functions given by the log-loss of a sum-log-concave function. We also proposed a sum-log-concave generalization of logistic regression.
Future directions of research include:
\begin{itemize}
    \item Exploring adaptive strategies for choosing the hyperparameter distribution $\mu$ in XGD, potentially through Dirichlet distribution random sampling.
    \item Combining XGD with GD, leveraging XGD's ability to avoid saddle points with a suitable $\mu$, and GD's exploration of better minimizers beyond the $\mu$-domain.
    \item Extending Fenchel's duality and biconjugate theorems to accommodate the sum-log-concave scenario.
\end{itemize}

\subsubsection*{Acknowledgments}
The author thanks Nidham Gazagnadou for precious insights on convex optimization.

\bibliography{bib_tmlr}
\bibliographystyle{tmlr}

\nocite{achab2022checkered}

\end{document}